\newtheorem{thm}{Theorem}[section]
\newtheorem{defn}{Definition}[section]
\newtheorem{lem}{Lemma}[section]
\newtheorem{prop}{Proposition}[section]
\numberwithin{equation}{section}
\theoremstyle{plain}
\theoremstyle{definition}
\title{On Hilfer-Prabhakar fractional derivatives Sawi transform and its applications to fractional differential equations}
\author{MOHD KHALID, SUBHASH ALHA }
\affil{Department of Mathematics, Maulana Azad National Urdu University,\\Gachibowli,Hyderabad-500032, India.\\ 

\vspace{3mm} \textup{Email: khalid.jmi47@gmail.com, subhashalha@manuu.edu.in}}
\date{}
\begin{document}

\maketitle

\noindent\textbf{Abstract:} The goal of this paper is to study the Sawi transform and its relationship to Hilfer-Prabhakar and regularized Hilfer-Prabhakar fractional derivatives, as well as to present some lemmas related to the Sawi transform. Additionally, the paper aims to find solutions for Cauchy type fractional differential equations using Hilfer-Prabhakar fractional derivatives, by utilizing the Sawi and Fourier transforms, and involving the three-parameter Mittag-Leffler function.\\
\noindent \textbf{Keywords and phrases:} Hilfer-Prabhakar, Prabhakar integral, Sawi transform, Fourier transform, Mittage-Leffler functions.

\vspace{3mm} \noindent \textbf{2010 Mathematical Subject Classification :} 44A35, 26A33, 42B10.\\
----------------------------------------------------------------------------------

\section{Introduction} Fractional calculus is a field of mathematics that deals with the study of fractional integrals and derivatives of real or complex orders. There are various types of fractional integrals and derivatives used in literature. In recent years, fractional calculus has gained significant attention from researchers due to its wide range of applications in various fields such as physics, chemistry, biology, control theory, economics, electronics and other areas of science and technology \cite{ Mainardi2010},\cite{Miller1993},\cite{Oldham1974},\cite{prabhakar1971singular},\cite{Samkokilbas1993},\cite{Agrawal2008omp}. The Prabhakar integral is a modification of the Riemann-Liouville integral that utilizes a kernel involving the three-parameter Mittag-Leffler function. The Hilfer-Prabhakar derivative and its regularized version were first introduced in a research paper in \cite{regularizedprabhakar}.

Many researchers have used Hilfer-Prabhakar fractional derivatives in various fields due to their special properties, which include the ability to combine several integral transforms such as Laplace, Fourier, Sumudu, Shehu, Elzaki and others.
In the research paper by Garra et al.\cite{regularizedprabhakar}, the authors studied the Laplace transform of the Hilfer-Prabhakar and its regularized version, and applied these results to equations in mathematical physics such as heat and free electron laser equations. Similarly, Panchal et al.\cite{sumudu} applied the Sumudu transform to a non-homogeneous Cauchy problem, and Yudhveer et al.\cite{Singh and Yudhveer} used the Elzaki transform in combination with the Hilfer-Prabhakar fractional derivative and its regularized version to solve a free electron laser type integro-differential equation.
Belgacem et al.\cite{shehu of hilfer-prabhakar} applied the Shehu transform on Prabhakar and Hilfer-Prabhakar derivatives and used it to find solutions for some fractional differential equations. Similarly, the Sawi transform has a deeper connection with Laplace, Elzaki, Sumudu, Kamal, and Monand transforms. In 2019, Mahgoub et al. \cite{Mahgoub2019} introduced a new integral transform called the Sawi transform, with the main purpose of studying its applicability for solving linear differential equations. Nowadays, the Sawi transform is widely used by researchers in science and engineering to solve various problems for integral and differential equations \cite{Higazy2021,AggarwalSudhanshu2019,Patil2021,Higazy2020,Bhuvaneswari2021,Sahoo2022  }.

In this paper, the Sawi transform is used to solve fractional differential equations by applying it to various forms of fractional derivatives, including the Prabhakar integral, Prabhakar derivatives, Hilfer-Prabhakar derivative, and their regularized versions. The findings of these research papers are then used to analyze Cauchy-type fractional differential equations that involve the Hilfer-Prabhakar fractional derivative of fractional order, in terms of the Mittag-Leffler type function.

\section{Definitions and preliminaries}
\begin{defn} \cite{Mahgoub2019}
The Sawi transform, represented by $\mathcal {T}(s)$, is applied to the function $\psi(t)$:

\begin{equation} \label{Sawi tranform}
\begin{split}
Sa[\psi(t),s] = \mathcal {T}(s) =\frac{1}{ s^2} \int_0^\infty \psi (t)& exp \left(\frac{-t}{s}\right)dt\\&
=\frac{1}{s}\int_{0}^{\infty} exp(-t)\psi(st)dt, s\in(\lambda_1,\lambda_2),
\end{split}
\end{equation}
he Sawi transform is applied to a set of functions
\begin{equation*}
\mathcal{A}= \left \{\psi(t)~|~\exists~ M,\lambda_1,\lambda_2 >0, k>0, |\psi(t)| \leq M~ e^{\left(\frac{t}{\lambda_j}\right)}, if~ t\in (-1)^j\times[0,\infty) \right \},
\end{equation*}
The integral transform (\ref{Sawi tranform}) is defined for all values of $\psi(t)$ that are greater than a certain constant value "k".
 
It can be easily established that the Sawi integral transform is a linear operator and has many characteristics that are similar to other integral transforms.
\end{defn}

\begin{prop} \cite{Aggarwal2020}
The Sawi transform to the convolution of two functions, $\psi(t)$ and $\phi(t)$, represented by $\mathcal {F}(s)$ and $\mathcal{G}(s)$ respectively, can be found by applying the Sawi transform to their convolution

\begin{equation}\label{convolution}
Sa[\psi(t)*\phi(t)),s]=s^2 \mathcal{F}(s) \mathcal{G}(s),
\end{equation}
 
or equivalently,
\begin{equation} \label{9}
T^{-1}\left[s^2 \mathcal{F}(s) \mathcal{G}(s),t\right]=  \psi(t)*\phi(t),
\end{equation}

where
\begin{equation}
\psi(t){*}\phi(t) = \int_0^{\infty} \psi(\tau)\phi(t-\tau)d\tau,  
\end{equation} 
\end{prop}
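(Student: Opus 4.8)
The plan is to work directly from the integral definition (\ref{Sawi tranform}) of the Sawi transform, insert the convolution integral, and then interchange the order of integration. First I would write
\[
Sa[\psi(t)*\phi(t),s] = \frac{1}{s^2}\int_0^\infty \left(\int_0^t \psi(\tau)\phi(t-\tau)\,d\tau\right) e^{-t/s}\,dt ,
\]
reading the convolution as a causal convolution so that $\phi$ is evaluated only at arguments $t-\tau\ge 0$. The integrand $\psi(\tau)\phi(t-\tau)e^{-t/s}$ lives on the region $0\le\tau\le t<\infty$; since $\psi,\phi\in\mathcal{A}$ are of exponential order, for $s$ in the common strip of convergence the double integral converges absolutely, which is what licenses the exchange.

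Next I would switch the order of integration, integrating in $t$ over $[\tau,\infty)$ first:
\[
Sa[\psi(t)*\phi(t),s] = \frac{1}{s^2}\int_0^\infty \psi(\tau)\left(\int_\tau^\infty \phi(t-\tau)e^{-t/s}\,dt\right)d\tau .
\]
The substitution $u=t-\tau$ in the inner integral turns it into $\int_0^\infty \phi(u)e^{-(u+\tau)/s}\,du = e^{-\tau/s}\int_0^\infty \phi(u)e^{-u/s}\,du$, so the whole expression factors:
\[
Sa[\psi(t)*\phi(t),s] = \frac{1}{s^2}\left(\int_0^\infty \psi(\tau)e^{-\tau/s}\,d\tau\right)\left(\int_0^\infty \phi(u)e^{-u/s}\,du\right).
\]
By (\ref{Sawi tranform}) each factor equals $s^2\mathcal{F}(s)$ and $s^2\mathcal{G}(s)$ respectively, so the right-hand side is $\frac{1}{s^2}\cdot s^2\mathcal{F}(s)\cdot s^2\mathcal{G}(s) = s^2\mathcal{F}(s)\mathcal{G}(s)$, which is (\ref{convolution}); applying $T^{-1}$ to both sides gives the equivalent form (\ref{9}).

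The only genuine subtlety is the justification of the interchange of the two integrations; everything else is a one-line change of variables and a regrouping of factors. I would handle that point by invoking $\psi,\phi\in\mathcal{A}$ and choosing $s\in(\lambda_1,\lambda_2)$ so that $|\psi(\tau)\phi(t-\tau)e^{-t/s}|$ is dominated on the quadrant by a product of integrable exponentials, whence Tonelli's theorem and then Fubini's theorem apply. With that in hand, the computation closes immediately.
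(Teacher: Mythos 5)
Your argument is correct and is the standard Fubini/change-of-variables proof of a convolution theorem; the paper itself offers no proof of this proposition (it is quoted from the cited reference), so there is nothing in the text for your approach to diverge from. Two small observations: first, you rightly read the convolution as the causal one with upper limit $t$ rather than the $\infty$ printed in the statement (with upper limit literally $\infty$ and $\phi$ supported on $[0,\infty)$ the identity would fail, so your reading is the intended one); second, your bookkeeping of the normalization is the only place the Sawi transform differs from the Laplace case, and you get it right, since each inner integral contributes $s^2\mathcal{F}(s)$ resp.\ $s^2\mathcal{G}(s)$ and the prefactor $1/s^2$ leaves exactly $s^2\mathcal{F}(s)\mathcal{G}(s)$. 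The Tonelli--Fubini justification via the exponential-order class $\mathcal{A}$ is the right way to license the interchange.
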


\begin{defn}
\cite{10.5555/1137742}
The Reimann-Liouville integral operator, which is of order $\alpha$ and greater than 0, is applied to a function $\psi(t)$
\begin{equation}\label{RL Iintegral}
_0 \mathcal{I} _t ^\alpha \psi(t) =  \frac{1}{\Gamma(\alpha)} \int_0^t (t-\tau)^{\alpha-1} \psi(\tau) d\tau, \alpha \in \mathbb{C}~and~ t>0.
\end{equation}
\end{defn}

\begin{defn} \cite{10.5555/1137742}
The Reimann-Liouville fractional derivative, which is of order $\alpha$ and greater than 0, is applied to a function $\psi(t)$
\begin{equation} \label{RL Derivative}
_0 \mathcal{D}_t^\alpha \psi(t)=\frac{1}{\Gamma (n-\alpha)} \frac{d^n}{d t^n} \int_0 ^t (t-\tau)^{n-\alpha-1} \psi(\tau) d\tau,~~ n -1<\alpha<n, ~n\in \mathbb{N}. 
\end{equation}

\begin{defn} \cite{10.5555/1137742}
 The Caputo fractional derivative, which is of order $\alpha$ and greater than 0, is applied to a function $\psi(t)$
\end{defn}
\begin{equation} \label{Caputo FD}
_0^C \mathcal{D}_t^\alpha \psi(t)=\frac{1}{\Gamma (n-\alpha)} \int_0 ^t (t-\tau)^{n-\alpha-1} \psi^{(n)}(\tau) d\tau,~~n -1<\alpha<n, ~~n\in \mathbb{N}
\end{equation}
\end{defn}

\begin{defn}\cite{doi:10.1142/9789812817747_0002}
The Hilfer fractional derivative, which is of order $\alpha$ and $\rho$ where $0<\alpha \leq 1$, and $0\leq \rho \leq 1$, is applied to a function $\psi(t)$
\begin{equation} \label{Hilfer FD}
_0 \mathcal{D}_t^{\alpha,\rho} \psi(t)=\left(_0 \mathcal{I}_t^{\rho(1-\alpha)} \frac{d}{dt} ( _0 \mathcal{I}_t^{(1-\alpha)(1-\rho)} \psi(t))   \right) 
\end{equation}
\end{defn}

\begin{defn}\cite{Millerkweyl1975,Saxenaweyl2006}The Weyl fractional differential operator, which is of order $\alpha$ and greater than 0, is applied to a function $\psi(t)$
\begin{equation}\label{weyl FD}
_{-\infty}\mathcal{D}_t^\alpha \psi(t) = \frac{1}{\Gamma(n-\alpha)}\frac{d^n}{dt^n}\int_{-\infty}^{t}(t-\tau)^{n-\alpha-1}\psi(\tau)d\tau,~~n-1<\alpha<n,~~n\in \mathbb{N} 
\end{equation}
The modified Fourier transform of the operator (\ref{weyl FD}), as defined by Metzler and Klafter in their publication \cite{MetzlerandKlafter} is provided,
\begin{equation}\label{fourier of weyl}
F\left\{_{-\infty}\mathcal{D}_t^\alpha \psi(x) \right\} =-k^\alpha \psi^*(k),
\end{equation}
\end{defn}
where $\psi^*(k)$ is the Fourier transform of $\psi(x)$

\begin{defn}\cite{DebnathBhatta2007}
Let $\psi(x)$ be a piecewise continuous function defined on $(-\infty, \infty)$ in each partial interval and absolutely integrable in $(-\infty,\infty)$ the Fourier transform is defined by the integral equation provided,
\begin{equation}\label{fourier tranform}
F[\psi(x),k]=\psi^*(k)=\int_{-\infty}^{\infty}\psi(x)exp(ikx)dx,
\end{equation}
and inverse of Fourier is
\begin{equation}\label{inverse of fourier}
\psi^{-1}[\psi^*(k)]=\frac{1}{2\pi}\int_{-\infty}^\infty \psi(k)exp(-ikx)dk,
\end{equation}
\end{defn}

\begin{defn}\cite{prabhakar1971singular}
 A one-parameter Mittag-Leffler function is provided,
\begin{equation}
E_{\alpha}(z) =\sum_{k=0}^\infty \frac{z^k}{\Gamma(\alpha k +1)},~~z, \alpha \in \mathbb{C}, ~~Re(\alpha)>0.
\end{equation}
A two-parameter Mittag-Leffler function is provided,
\begin{equation}
E_{\alpha,\rho}(z) =\sum_{k=0}^\infty \frac{(z^k)}{\Gamma(\alpha k +\rho)},~~z, \alpha, \rho \in \mathbb{C}, ~~Re(\alpha)>0.
\end{equation}
\end{defn}

\begin{defn}\cite{prabhakar1971singular}
A three-parameter Mittag-Leffler function, also known as the Prabhakar function is provided,
\begin{equation}\label{three pera M-L fun}
E_{\alpha,\rho}^{\gamma}(z)=\sum_{k=0}^{\infty}\frac{(\gamma)_{k}}{\Gamma(\alpha k+\rho)}\frac{(z)^k}{k!},~~~ z, \alpha,\rho,\gamma \in \mathbb{C},\alpha>0,
\end{equation}

for applications purpose we will use further generalization of (\ref{three pera M-L fun}) which is obtained
\begin{equation}\label{mittage in generalization form}
e_{\alpha,\rho,\omega}^{\gamma}=t^{\rho-1}E_{\alpha,\rho}^{\gamma}(\omega t^\alpha)
\end{equation}
the function is provided in terms to the parameter $\omega \in \mathbb{C}$, and the independent variable t, which is a real variable greater than 0.
\end{defn}

\begin{defn}\cite{prabhakar1971singular}
The Prabhakar fractional integral for the function $\psi$ in $L^1 [0,1], 0<b< \infty$, and $t>0$ can be represented in the given format
\begin{equation}\label{prabhakar integral}
\begin{split}
\mathcal{I}_{\alpha,\rho,\omega,0^+}^{\gamma}\psi(t)&=\int_{0}^{t}(t-\tau)^{\rho-1}E_{\alpha,\rho}^{\gamma}(\omega (t-\tau)^\alpha) \psi(\tau)d\tau\\&
=(\psi *e_{\alpha,\rho,\omega}^{\gamma})(t)
\end{split}
\end{equation}
the parameters $\alpha,\rho,\gamma, \omega$ in the given representation of the Prabhakar fractional integral are complex numbers and $\alpha, \rho$ are greater than 0.
\end{defn}

\begin{defn}\cite{prabhakar1971singular}
The Prabhakar fractional derivative for the function $\psi$ in  $L^1 [0,1], 0<b< \infty, t>0$ and $\psi*e_{\alpha,\rho,\omega}^\gamma \in W^{k,1}[0,b], k= \lceil \rho \rceil$,  can be expressed in the given formatat
\begin{equation}\label{prabhakar derivative}
\mathcal{D}_{\alpha,\rho,\omega,0^+}^{\gamma} \psi(t)=
\frac{d^k}{dt^k}\mathcal{I}_{\alpha,k-\rho,\omega,0^+}^{-\gamma} \psi(t),
 \end{equation}
the parameters $\alpha, \rho, \gamma,\omega$ in the given representation of the Prabhakar fractional derivative are complex numbers and the real parts of $\alpha$ and $\rho$ are greater than 0.

The Reimann Liouville Fractional Derivative in (\ref{RL Derivative}) can be expressed in the given format
\begin{equation}
\mathcal{D}_{\alpha,\rho,\omega,0^+}^{\gamma} \psi(t)=\mathcal{D}_{0^+}^{\rho+\theta} \mathcal{I}_{\alpha,\theta,\omega,0^+}^{-\gamma}\psi(t),
\end{equation}
\end{defn}

\begin{defn}\cite{regularizedprabhakar}
The regularized Prabhakar fractional derivative for the function $\psi$ in $AC[0,b], 0<b<\infty, t>0$, and $k= \lceil \rho \rceil$,  can be expressed in the given format
\begin{equation}\label{regularized prabhakar}
^C\mathcal{D}_{\alpha,\rho,\omega,0^+}^{\gamma}\psi(t) =\mathcal{I}_{\alpha,k-\rho,\omega,0^+}^{-\gamma} \frac{d^k}{dt^k} \psi(t)
\end{equation}
where $\alpha$, $\rho$, $\gamma$, $\omega$ are complex numbers, and the real parts of $\alpha$ and $\rho$ are greater than 0.
\end{defn}

\begin{defn}\cite{regularizedprabhakar,k-Hilfer-Prabhakar}
Let  $\psi \in L^1 [a,b],\rho \in (0,1), \nu \in [0,1], 0<b<t\leq \infty, ~~ \psi *e_{\alpha,(1-\nu)(1-\rho),\omega}^{-\gamma(1-\nu)}(.) \in AC^1[a,b]$. The \textbf{Hilfer-Prabhakar fractional derivative} is obtained
\begin{equation}\label{hilfer-prabhar derivative}
\mathcal{D}_{\alpha,\omega,0^+}^{\gamma,\rho,\nu} \psi(t)  =  \left(\mathcal{I}_{\alpha,\nu(1-\rho),\omega,0^+}^{-\gamma\nu} \frac{d}{dt} (\mathcal{I}_{\alpha,(1-\nu)(1-\rho),\omega,0^+}^{-\gamma(1-\nu)}\psi )\right)(t)
\end{equation}
\end{defn}

\begin{defn}\cite{k-Hilfer-Prabhakar} 
\textbf{The regularized Hilfer-Prabhakar fractional derivative} to the function $\psi(t)$ in $L^1 [a,b],\rho \in (0,1), \nu \in [0,1], 0<b\leq \infty, t>0$ is obtained  
\begin{equation}\label{regularized hilfer-prabhar d}
\begin{split}
^C\mathcal{D}_{\alpha,\omega,0^+}^{\gamma,\rho,\nu} \psi(t)&
=  \left(\mathcal{I}_{\alpha,\nu(1-\rho),\omega,0^+}^{-\gamma\nu}\mathcal{I}_{\alpha,(1-\nu)(1-\rho),\omega,0^+}^{-\gamma(1-\nu)}\frac{d}{dt}\psi \right)(t)\\&
= \mathcal{I}_{\alpha,1-\rho,\omega,0^+}^{-\gamma}\frac{d}{dt}\psi(t)
\end{split}
\end{equation}
\end{defn}

\begin{thm}  \cite{Mahgoub2019}
If $\mathcal{T} (s)$ represents the Sawi transform to the function $\psi(t)$, then the Sawi transform to the mth derivative of $\psi(t)$, represented by $\mathcal {T}_m$(s) 
\begin{equation}\label{mth Sawi transform} 
\mathcal{T}_m(s)=Sa[\psi^{(m)}(t),s] = s^{-m}\mathcal{T}(s)-  \sum_{k=0}^{m-1} s^{k-m-1}\psi^{(k)}(0), m\geq 0
\end{equation}
\end{thm}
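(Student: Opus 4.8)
The plan is to establish the formula by induction on $m$, with the base case $m=1$ handled by a single integration by parts, exactly mirroring the classical Laplace-transform differentiation rule and its Elzaki/Sumudu analogues.

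First I would dispose of $m=0$, where the claimed identity degenerates to $\mathcal{T}_0(s)=\mathcal{T}(s)$ with an empty sum. Then for $m=1$ I would start from $Sa[\psi'(t),s]=\frac{1}{s^2}\int_0^\infty \psi'(t)e^{-t/s}\,dt$ and integrate by parts with $u=e^{-t/s}$ and $dv=\psi'(t)\,dt$, obtaining $\int_0^\infty \psi'(t)e^{-t/s}\,dt=\big[\psi(t)e^{-t/s}\big]_0^\infty+\frac{1}{s}\int_0^\infty \psi(t)e^{-t/s}\,dt$. The boundary term at infinity vanishes because $\psi\in\mathcal{A}$ forces $|\psi(t)|\le Me^{t/\lambda_2}$, so $\psi(t)e^{-t/s}\to 0$ for $s$ in the admissible strip, while the boundary term at $0$ contributes $-\psi(0)$. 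Multiplying through by $s^{-2}$ yields $Sa[\psi'(t),s]=s^{-1}\mathcal{T}(s)-s^{-2}\psi(0)$, which is precisely the stated formula at $m=1$.

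For the inductive step I would assume the identity for some $m\ge 1$ and apply the $m=1$ case to the function $\psi^{(m)}$ (whose Sawi transform exists under the same growth hypotheses on its derivatives), giving $Sa[\psi^{(m+1)}(t),s]=s^{-1}Sa[\psi^{(m)}(t),s]-s^{-2}\psi^{(m)}(0)$. Substituting the induction hypothesis for $Sa[\psi^{(m)}(t),s]$ and distributing $s^{-1}$ produces $s^{-(m+1)}\mathcal{T}(s)-\sum_{k=0}^{m-1}s^{k-m-2}\psi^{(k)}(0)-s^{-2}\psi^{(m)}(0)$; recognizing $s^{-2}\psi^{(m)}(0)=s^{m-(m+1)-1}\psi^{(m)}(0)$ lets me absorb the trailing term into the sum, yielding $\sum_{k=0}^{m}s^{k-(m+1)-1}\psi^{(k)}(0)$, which is exactly the claimed expression for $m+1$.

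The only real subtlety — the "hard part", such as it is — is the exponent bookkeeping with terms of the form $s^{k-m-1}$, making sure the one extra term produced at each stage lands correctly at the top of the telescoped sum; beyond that one need only keep track of the standing regularity and growth assumptions ($\psi$ being $m$-times differentiable with each derivative of admissible exponential order) that guarantee convergence of all the integrals and vanishing of the boundary contributions at infinity. Everything else is routine.
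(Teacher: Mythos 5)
Your proof is correct. The paper does not actually prove this theorem --- it is quoted from the cited reference \cite{Mahgoub2019} and used as a known tool --- so there is no in-paper argument to compare against; your route (base case $m=1$ by a single integration by parts with the boundary term at infinity killed by the exponential-order hypothesis, then induction on $m$ by applying the $m=1$ identity to $\psi^{(m)}$ and absorbing the new term $s^{-2}\psi^{(m)}(0)=s^{m-(m+1)-1}\psi^{(m)}(0)$ into the sum) is the standard derivation and the exponent bookkeeping checks out.
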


\begin{lem}
Let $0<\alpha<1$ and $\omega \in \mathbb{C} $ such that $Re(\alpha) >0, ~Re(\rho)>0),~Re(\gamma)>0$. The Sawi
transform of Mittage-Leffler type function $t^{\rho-1} E_{\alpha,\rho}^\gamma (\omega t^\alpha) $ by using (\ref{Sawi tranform}), (\ref{three pera M-L fun}) is obtained
\begin{equation}\label{Sawi of 3 pera M-L function }
Sa \left[t^{\rho-1} E_{\alpha,\rho}^\gamma (\omega t^\alpha) ,s \right]
= s^{\rho-2} (1-\omega s^\alpha)^{-\gamma},
\end{equation}
\end{lem}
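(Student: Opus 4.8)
The plan is to expand the three-parameter Mittag-Leffler function as a power series, apply the Sawi transform term by term, and recognize the resulting series as a binomial expansion. First I would write
\[
t^{\rho-1} E_{\alpha,\rho}^\gamma(\omega t^\alpha) = \sum_{k=0}^\infty \frac{(\gamma)_k}{\Gamma(\alpha k+\rho)}\frac{\omega^k}{k!}\, t^{\alpha k+\rho-1},
\]
using the definition (\ref{three pera M-L fun}) together with (\ref{mittage in generalization form}). Since the Sawi transform is a linear operator, I would interchange the sum and the integral in (\ref{Sawi tranform}) — this interchange is the step that requires the hypotheses $Re(\alpha)>0$, $Re(\rho)>0$ and the restriction on $\omega s^\alpha$ to guarantee uniform convergence on the relevant region, and is the only genuine analytic subtlety in the argument.

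Next I would compute the Sawi transform of the monomial $t^{\mu-1}$ for $Re(\mu)>0$. Directly from (\ref{Sawi tranform}),
\[
Sa[t^{\mu-1},s] = \frac{1}{s^2}\int_0^\infty t^{\mu-1} e^{-t/s}\,dt = \frac{1}{s^2}\, s^{\mu}\,\Gamma(\mu) = \Gamma(\mu)\, s^{\mu-2},
\]
after the substitution $u=t/s$. Applying this with $\mu = \alpha k+\rho$ to each term of the series gives
\[
Sa\!\left[t^{\rho-1} E_{\alpha,\rho}^\gamma(\omega t^\alpha),s\right] = \sum_{k=0}^\infty \frac{(\gamma)_k}{\Gamma(\alpha k+\rho)}\frac{\omega^k}{k!}\,\Gamma(\alpha k+\rho)\, s^{\alpha k+\rho-2} = s^{\rho-2}\sum_{k=0}^\infty \frac{(\gamma)_k}{k!}\,(\omega s^\alpha)^k,
\]
where the factors $\Gamma(\alpha k+\rho)$ cancel cleanly.

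Finally I would invoke the generalized binomial series $\sum_{k=0}^\infty \frac{(\gamma)_k}{k!} z^k = (1-z)^{-\gamma}$, valid for $|z|<1$, with $z=\omega s^\alpha$, which yields
\[
Sa\!\left[t^{\rho-1} E_{\alpha,\rho}^\gamma(\omega t^\alpha),s\right] = s^{\rho-2}(1-\omega s^\alpha)^{-\gamma},
\]
as claimed in (\ref{Sawi of 3 pera M-L function }). The main obstacle, as noted, is justifying the termwise transformation rigorously; in the spirit of the computational results in this paper I would handle it by restricting to $s$ with $|\omega s^\alpha|<1$ so that the monomial bounds make the series dominated and the interchange legitimate, and then extend by analytic continuation in $s$ if a wider range is desired.
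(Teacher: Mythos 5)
Your proposal is correct and follows essentially the same route as the paper: expand $E_{\alpha,\rho}^\gamma$ as a power series, transform term by term so that $\Gamma(\alpha k+\rho)$ cancels, and sum the resulting series via the binomial identity $\sum_{k\ge 0}\frac{(\gamma)_k}{k!}z^k=(1-z)^{-\gamma}$ for $|z|<1$. The only difference is cosmetic (you use the $e^{-t/s}$ form of the transform and substitute $u=t/s$, while the paper starts from the equivalent $\frac{1}{s}\int_0^\infty e^{-t}\psi(st)\,dt$ form), and your remarks on justifying the interchange of sum and integral go beyond what the paper records.
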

\begin{proof}
\begin{equation*}
\begin{split}
Sa \left[t^{\rho-1} E_{\alpha,\rho}^\gamma (\omega t^\alpha) ,s \right]&
= \frac{1}{s}\int_{0}^{\infty} e^{-t}(st)^{\rho-1}E_{\alpha,\rho}^{\gamma}(\omega(st)^\alpha)dt,\\&
% =\frac{1}{s} s^{\rho-1}\int_0^\infty e^{-t}t^{\rho-1}\times\sum_{k=0}^{\infty}\frac{(\gamma)_k (\omega(st)^\alpha)^k}{\Gamma(\alpha k+\rho) k!}dt,\\&
=s^{\rho-2}\sum_{k=0}^\infty \frac{(\gamma)_k (\omega s^\alpha)^k}{\Gamma(\alpha k+\rho) k!}\int_{0}^\infty e^{-t}t^{\alpha k+\rho-1}dt,\\&
=s^{\rho-2}\sum_{k=0}^{\infty}\frac{(\gamma)_k (\omega s^\alpha)^k}{k!},\\&
=s^{\rho-2}(1-\omega s^\alpha)^{-\gamma}.
\end{split}
\end{equation*}
\end{proof}

\begin{lem}
Given that the Sawi transform to the function $\psi(t)$ representation as $\mathcal{T}(s)$, the Sawi transform to the Prabhakar fractional integral of $\psi(t)$ can be found by using equations (\ref{convolution}), (\ref{Sawi of 3 pera M-L function }), and represented as shown below:
\begin{equation}\label{Sawi of prabhakar integral}
Sa[\mathcal{I}_{\alpha,\rho,\omega,0^+}^{\gamma} \psi(t),s]
=s^\rho (1-\omega s^\alpha)^{-\gamma} \mathcal{T}(s),
\end{equation}
\begin{proof}
\begin{equation*}
\begin{split}
Sa[\mathcal{I}_{\alpha,\rho,\omega,0^+}^{\gamma} \psi(t),s]& 
= Sa \left[\int_0^t (t-\tau)^{\rho-1} E_{\alpha,\rho}^\gamma[\omega (t-\tau)^\alpha]\psi(\tau)d\tau ,s \right] \\&
= Sa \left[ (\psi *e_{\alpha,\rho,\omega}^\gamma)(t),s \right]\\&
= s^2 Sa \left[t^{\rho-1} E_{\alpha,\rho}^\gamma (\omega t^\alpha) ,s \right]\times Sa[\psi(t),s]\\&
=s^2\times s^{\rho-2}(1-\omega s^\alpha)^{-\gamma}\mathcal{T}(s)  \\& 
=s^\rho(1-\omega s^\alpha)^{-\gamma} \mathcal{T}(s),
\end{split}
\end{equation*}
\end{proof}
\end{lem}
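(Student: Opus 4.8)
The plan is to reduce the identity to the Sawi convolution theorem. First I would use the definition of the Prabhakar fractional integral, equation~(\ref{prabhakar integral}), which already writes $\mathcal{I}_{\alpha,\rho,\omega,0^+}^{\gamma}\psi(t)$ as the convolution $\bigl(\psi * e_{\alpha,\rho,\omega}^{\gamma}\bigr)(t)$ with the generalized Mittag-Leffler kernel $e_{\alpha,\rho,\omega}^{\gamma}(t)=t^{\rho-1}E_{\alpha,\rho}^{\gamma}(\omega t^{\alpha})$ from~(\ref{mittage in generalization form}). Hence the left-hand side of~(\ref{Sawi of prabhakar integral}) is nothing but $Sa\bigl[\bigl(\psi * e_{\alpha,\rho,\omega}^{\gamma}\bigr)(t),s\bigr]$.

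Next I would apply the convolution rule~(\ref{convolution}) to the two factors $\psi$ and $e_{\alpha,\rho,\omega}^{\gamma}$. Setting $\mathcal{T}(s)=Sa[\psi(t),s]$, this produces $s^{2}\,\mathcal{T}(s)\,Sa\bigl[e_{\alpha,\rho,\omega}^{\gamma}(t),s\bigr]$. For the last transform I would substitute the preceding lemma, equation~(\ref{Sawi of 3 pera M-L function }), which gives $Sa\bigl[t^{\rho-1}E_{\alpha,\rho}^{\gamma}(\omega t^{\alpha}),s\bigr]=s^{\rho-2}(1-\omega s^{\alpha})^{-\gamma}$. Combining the powers of $s$ via $s^{2}\cdot s^{\rho-2}=s^{\rho}$ then yields precisely $s^{\rho}(1-\omega s^{\alpha})^{-\gamma}\mathcal{T}(s)$, which is the claimed formula.

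I do not expect any genuine obstacle: the whole argument is a short chain of equalities. The only point that merits attention is the legitimacy of applying the convolution theorem, which requires $\psi$ to belong to the admissible class $\mathcal{A}$ of~(\ref{Sawi tranform}) and the kernel $e_{\alpha,\rho,\omega}^{\gamma}$ to be integrable near $t=0$. The latter is ensured by $\mathrm{Re}(\rho)>0$, so that $t^{\rho-1}$ is locally integrable at the origin, together with the fact that $E_{\alpha,\rho}^{\gamma}$ is an entire power series; the former is part of the standing hypotheses on $\psi$. Under these conditions the interchange of the Sawi integral with the convolution integral is justified by Fubini's theorem, and the manipulations with $(1-\omega s^{\alpha})^{-\gamma}$ are valid in the usual regime $|\omega s^{\alpha}|<1$ in which the three-parameter Mittag-Leffler series is summed.
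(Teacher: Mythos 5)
Your proposal is correct and follows essentially the same route as the paper: write the Prabhakar integral as the convolution $(\psi * e_{\alpha,\rho,\omega}^{\gamma})(t)$, apply the Sawi convolution rule (\ref{convolution}), substitute (\ref{Sawi of 3 pera M-L function }), and combine $s^{2}\cdot s^{\rho-2}=s^{\rho}$. The extra remarks on admissibility of $\psi$ and local integrability of the kernel go slightly beyond what the paper records but do not change the argument.
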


\section{Main results}

\begin{thm}\textbf{[Sawi transform of Prabhakar derivative]:}The Sawi transform to the Prabhakar fractional derivative representation in the given format
\begin{equation}\label{Sawi of prabhakar derivative}
Sa[\mathcal{D}_{\alpha,\rho,\omega,0^+}^{\gamma} \psi(t),s ]
=s^{-\rho} (1-\omega s^\alpha)^\gamma \mathcal{T}(s)  
- \sum_{k=0}^{m-1} s^{k-m-1}  \mathcal{D}_{\alpha,k-m+\rho,\omega,0^+}^{\gamma} \psi(t)|_{t=0}
\end{equation}
\end{thm}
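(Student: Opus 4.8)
The plan is to reduce the statement to the two lemmas already proved---the Sawi transform of the Prabhakar integral (\ref{Sawi of prabhakar integral}) and the Sawi transform of an integer-order derivative (\ref{mth Sawi transform})---by exploiting the structural definition (\ref{prabhakar derivative}) of the Prabhakar derivative as an ordinary $m$-th derivative of a Prabhakar integral, where $m=\lceil\rho\rceil$ (the natural choice, consistent with (\ref{prabhakar derivative})).

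First I would set $g(t):=\mathcal{I}_{\alpha,m-\rho,\omega,0^+}^{-\gamma}\psi(t)$, so that (\ref{prabhakar derivative}) reads $\mathcal{D}_{\alpha,\rho,\omega,0^+}^{\gamma}\psi(t)=g^{(m)}(t)$. Applying (\ref{mth Sawi transform}) to $g$ gives
\begin{equation*}
Sa[\mathcal{D}_{\alpha,\rho,\omega,0^+}^{\gamma}\psi(t),s]=s^{-m}\,Sa[g(t),s]-\sum_{k=0}^{m-1}s^{k-m-1}g^{(k)}(0).
\end{equation*}
Next I would evaluate $Sa[g(t),s]$ by invoking (\ref{Sawi of prabhakar integral}) with the parameter $\rho$ there replaced by $m-\rho$ and $\gamma$ replaced by $-\gamma$; since $m-1<\rho\le m$ the order $m-\rho$ has nonnegative real part and the lemma applies, yielding $Sa[g(t),s]=s^{m-\rho}(1-\omega s^\alpha)^{\gamma}\mathcal{T}(s)$. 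Multiplying by $s^{-m}$ collapses the power of $s$ to $s^{-\rho}$ and produces the first term $s^{-\rho}(1-\omega s^\alpha)^{\gamma}\mathcal{T}(s)$ of (\ref{Sawi of prabhakar derivative}).

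The remaining work is to recognise the boundary coefficients. For $0\le k\le m-1$ I would write $g^{(k)}(t)=\frac{d^k}{dt^k}\mathcal{I}_{\alpha,m-\rho,\omega,0^+}^{-\gamma}\psi(t)=\frac{d^k}{dt^k}\mathcal{I}_{\alpha,k-(k-m+\rho),\omega,0^+}^{-\gamma}\psi(t)$ and compare with the definition (\ref{prabhakar derivative}) of the Prabhakar derivative of order $k-m+\rho$: because $m-1<\rho\le m$ one has $k-1<k-m+\rho\le k$, hence $\lceil k-m+\rho\rceil=k$, so the right-hand side is exactly $\mathcal{D}_{\alpha,k-m+\rho,\omega,0^+}^{\gamma}\psi(t)$. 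Evaluating at $t=0$ and substituting into the displayed identity yields precisely (\ref{Sawi of prabhakar derivative}).

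I expect the main obstacle to be this last identification of the boundary terms: one must verify that $\lceil k-m+\rho\rceil=k$ for every $k=0,\dots,m-1$ (with attention to the borderline case in which $\rho$ is a positive integer and $\rho=m$, for which $k-m+\rho=k$ and the ceiling is still $k$), and one must check that the regularity hypotheses behind (\ref{prabhakar derivative}) guarantee that $g$ together with its first $m-1$ derivatives have well-defined traces at $t=0$, so that the boundary terms in (\ref{mth Sawi transform}) are meaningful. The transform manipulations themselves are routine once (\ref{Sawi of prabhakar integral}) and (\ref{mth Sawi transform}) are available.
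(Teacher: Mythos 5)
Your proposal follows exactly the same route as the paper's own proof: write the Prabhakar derivative as $\frac{d^m}{dt^m}g(t)$ with $g=\mathcal{I}_{\alpha,m-\rho,\omega,0^+}^{-\gamma}\psi$, apply (\ref{mth Sawi transform}) to $g$, evaluate $Sa[g(t),s]$ via (\ref{Sawi of prabhakar integral}) with parameters $m-\rho$ and $-\gamma$, and recognise the boundary values $g^{(k)}(0)$ as $\mathcal{D}_{\alpha,k-m+\rho,\omega,0^+}^{\gamma}\psi(t)|_{t=0}$. In fact you are more careful than the paper, which leaves the intermediate transform step and the verification $\lceil k-m+\rho\rceil=k$ implicit.
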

\begin{proof} Given that the Sawi transform to the function $\psi(t)$ representation as $\mathcal{T}(s)$, by applying the Sawi transform on the Prabhakar fractional derivative (\ref{prabhakar derivative}) w.r.t.  variable $t$, and using equations (\ref{mth Sawi transform}) and convolution (\ref{convolution}), we obtain the given representation
\begin{equation*}
\begin{split}
Sa&[\mathcal{D}_{\alpha,\rho,\omega,0^+}^{\gamma} \psi(t),s ] \\&
= Sa \left[\frac{d^m}{dt^m}\mathcal{I}_{\alpha,m-\rho,\omega,0^+}^{-\gamma} \psi(t),s \right]\\&
= Sa \left[\frac{d^m}{dt^m} g(t),s \right],~~where~~~ g(t)=\mathcal{I}_{\alpha,m-\rho,\omega,0^+}^{-\gamma} \psi(t) \\&
= s^{-m} Sa[g(t),s]- \sum_{k=0}^{m-1} s^{k-m-1} g^{(k)}(0),~~g^{(k)}(0)= \frac{d^k}{dt^k} \mathcal{I}_{\alpha,m-\rho,\omega,0^+}^{-\gamma} \psi(0) \\&
% =s^{-m} s^{m-\rho} (1-\omega s^\alpha)^{\gamma} Sa[\psi(t),s]
% - \sum_{k=0}^{m-1} s^{k-m-1}
% \frac{d^k}{dt^k} \mathcal{I}_{\alpha,m-\rho,\omega,0^+}^{-\gamma} \psi(t)\big |_{t=0}\\&
=s^{-\rho} (1-\omega s^\alpha)^{\gamma} \mathcal{T}(s)- \sum_{k=0}^{m-1} s^{k-m-1}  \left[\mathcal{D}_{\alpha,k-m+\rho,\omega,0^+}^{\gamma} \psi(t)\right]_{t=0}
\end{split}
\end{equation*}
\end{proof}

\begin{thm}\textbf{[Sawi transform of regularised Prabhakar derivative]:} The Sawi transform to the regularized Prabhakar fractional derivative representation in the given format
\begin{equation}\label{Sawi of reg prabhakar}
Sa[^C\mathcal{D}_{\alpha,\rho,\omega,0^+}^{\gamma} \psi(t),s ]
=s^{-\rho} (1-\omega s^\alpha)^{\gamma} \mathcal{T}(s)
- \sum_{k=0}^{m-1} s^{k-\rho-1} (1-\omega s^\alpha)^{\gamma} \psi^{(k)}(0^+)
\end{equation}
\end{thm}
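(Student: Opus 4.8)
The plan is to exploit the structural feature of the regularized derivative: in (\ref{regularized prabhakar}) the Prabhakar integral operator acts \emph{after} the ordinary $m$-th derivative, so the computation reduces to composing two transform rules that are already available, with none of the boundary-term bookkeeping needed in the preceding theorem. Write $m=\lceil\rho\rceil$. By definition,
\[
{}^C\mathcal{D}_{\alpha,\rho,\omega,0^+}^{\gamma}\psi(t)=\mathcal{I}_{\alpha,m-\rho,\omega,0^+}^{-\gamma}\psi^{(m)}(t),
\]
that is, the Prabhakar fractional integral with parameters $(\alpha,\,m-\rho,\,\omega)$ and upper index $-\gamma$ applied to the classical derivative $\psi^{(m)}$.

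First I would apply the Sawi transform to both sides and invoke the Prabhakar-integral lemma (\ref{Sawi of prabhakar integral}) with the substitutions $\rho\mapsto m-\rho$ and $\gamma\mapsto-\gamma$, taking $\psi^{(m)}$ as the input function; since $(1-\omega s^{\alpha})^{-(-\gamma)}=(1-\omega s^{\alpha})^{\gamma}$, this gives
\[
Sa[{}^C\mathcal{D}_{\alpha,\rho,\omega,0^+}^{\gamma}\psi(t),s]=s^{\,m-\rho}(1-\omega s^{\alpha})^{\gamma}\,Sa[\psi^{(m)}(t),s].
\]
Then I would substitute the derivative rule (\ref{mth Sawi transform}), namely $Sa[\psi^{(m)}(t),s]=s^{-m}\mathcal{T}(s)-\sum_{k=0}^{m-1}s^{k-m-1}\psi^{(k)}(0)$, and distribute the prefactor $s^{m-\rho}(1-\omega s^{\alpha})^{\gamma}$: the leading term collapses to $s^{-\rho}(1-\omega s^{\alpha})^{\gamma}\mathcal{T}(s)$, while a generic term of the sum becomes $s^{m-\rho}\cdot s^{k-m-1}(1-\omega s^{\alpha})^{\gamma}\psi^{(k)}(0)=s^{k-\rho-1}(1-\omega s^{\alpha})^{\gamma}\psi^{(k)}(0^+)$, which is exactly (\ref{Sawi of reg prabhakar}).

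The only matters requiring attention — rather than genuine obstacles — are the hypotheses that license these two steps: $\psi$ must be regular enough (say in $AC^m[a,b]$) for $\psi^{(m)}$ to lie in the function class for which Lemma (\ref{Sawi of prabhakar integral}) was proved and for the one-sided limits $\psi^{(k)}(0^+)$ to exist, and one should record that the convolution identity (\ref{convolution}) underlying that lemma, together with absolute convergence of the defining integral (\ref{Sawi tranform}) on the strip $s\in(\lambda_1,\lambda_2)$, justifies interchanging the transform with the integral operator. Granted these, the proof is a two-line composition and I anticipate no substantive difficulty; the point worth noting is the contrast with the non-regularized Theorem (\ref{Sawi of prabhakar derivative}), where the boundary sum comes out of differentiating a Riemann--Liouville-type object, whereas here it arises cleanly from the classical $m$-th-derivative rule and every term carries the common factor $(1-\omega s^{\alpha})^{\gamma}$.
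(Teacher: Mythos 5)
Your proposal is correct and follows exactly the paper's own argument: write ${}^C\mathcal{D}_{\alpha,\rho,\omega,0^+}^{\gamma}\psi=\mathcal{I}_{\alpha,m-\rho,\omega,0^+}^{-\gamma}\psi^{(m)}$, apply Lemma (\ref{Sawi of prabhakar integral}) with $\rho\mapsto m-\rho$, $\gamma\mapsto-\gamma$ to get the prefactor $s^{m-\rho}(1-\omega s^{\alpha})^{\gamma}$, then substitute the $m$-th derivative rule (\ref{mth Sawi transform}) and distribute. The exponent bookkeeping matches the paper's line for line, so there is nothing further to add.
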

\begin{proof}Given that the Sawi transform to the function $\psi(t)$ representation as $\mathcal{T}(s)$, by applying the Sawi transform on the regularized Prabhakar fractional derivative (\ref{regularized prabhakar}) w.r.t.  variable $t$, and using equations (\ref{Sawi of prabhakar integral}), (\ref{mth Sawi transform}), and convolution (\ref{convolution}) of Sawi transform, we obtain the given representation
\begin{equation*}
\begin{split}
Sa& [^C\mathcal{D}_{\alpha,\rho,\omega,0^+}^{\gamma} \psi(t),s ] \\ &
= Sa\left[\mathcal{I}_{\alpha,m-\rho,\omega,0^+}^{-\gamma} \frac{d^m}{dt^m} \psi(t),s\right]\\&
= Sa\left[\mathcal{I}_{\alpha,m-\rho,\omega,0^+}^{-\gamma} h(t),s\right], ~~~ where~~~ h(t)=\frac{d^m}{dt^m} \psi(t) \\&
= s^{m-\rho} (1-\omega s^\alpha)^\gamma Sa[h(t),s]\\&
= s^{m-\rho} (1-\omega s^\alpha)^\gamma \left[s^{-m} Sa[\psi(t),s] - \sum_{k=0}^{m-1}s^{k-m-1} \psi^{(k)}(0) \right]\\&
=  s^{-\rho}(1-\omega s^\alpha)^{\gamma} \mathcal{T}(s)
- \sum_{k=0}^{m-1} s^{k-\rho-1} (1-\omega s^\alpha)^{\gamma} \psi^{(k)}(0^+)
\end{split}
\end{equation*}
\end{proof}

\begin{thm}\textbf{[Sawi transform of Hilfer-Prabhakar  derivative]:} The Sawi transformation expresses the Hilfer-Prabhakar fractional derivative in a specific form
\begin{equation} \label{Sawi of hilfer-prabhakar}
\begin{split}
Sa[\mathcal{D}_{\alpha,\omega,0^+}^{\gamma,\rho,\nu} \psi(t),s ]= s^{-\rho} (1-\omega  s^\alpha &)^\gamma  \mathcal{T}(s) \\ & 
-s^{\nu(1-\rho)-2} (1-\omega s^\alpha)^{\gamma \nu}~ \mathcal{I}_{\alpha,(1-\nu)(1-\rho),\omega,0^+}^{-\gamma(1-\nu)}\psi(t)|_{t=0^+}
\end{split}
\end{equation}
\end{thm}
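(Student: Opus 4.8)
The plan is to peel the Hilfer-Prabhakar derivative apart into three operations that we already know how to transform: an inner Prabhakar integral, an ordinary first derivative, and an outer Prabhakar integral; then apply in sequence the building blocks established above. Writing $g(t) = \mathcal{I}_{\alpha,(1-\nu)(1-\rho),\omega,0^+}^{-\gamma(1-\nu)}\psi(t)$, the definition \eqref{hilfer-prabhar derivative} reads $\mathcal{D}_{\alpha,\omega,0^+}^{\gamma,\rho,\nu}\psi = \mathcal{I}_{\alpha,\nu(1-\rho),\omega,0^+}^{-\gamma\nu}\,g'$, so a single use of the linearity of $Sa[\cdot,s]$ together with \eqref{Sawi of prabhakar integral} and \eqref{mth Sawi transform} (the latter with $m=1$) will do the whole job.

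First I would apply \eqref{Sawi of prabhakar integral} to the outer integral, with the replacements $\rho\mapsto\nu(1-\rho)$ and $\gamma\mapsto-\gamma\nu$, obtaining $Sa[\mathcal{D}_{\alpha,\omega,0^+}^{\gamma,\rho,\nu}\psi,s] = s^{\nu(1-\rho)}(1-\omega s^\alpha)^{\gamma\nu}\,Sa[g',s]$. Next I would invoke \eqref{mth Sawi transform} with $m=1$ to write $Sa[g',s] = s^{-1}Sa[g,s] - s^{-2}g(0^+)$. Finally I would apply \eqref{Sawi of prabhakar integral} once more, now to $g$ itself with $\rho\mapsto(1-\nu)(1-\rho)$ and $\gamma\mapsto-\gamma(1-\nu)$, which gives $Sa[g,s] = s^{(1-\nu)(1-\rho)}(1-\omega s^\alpha)^{\gamma(1-\nu)}\mathcal{T}(s)$, while $g(0^+) = \mathcal{I}_{\alpha,(1-\nu)(1-\rho),\omega,0^+}^{-\gamma(1-\nu)}\psi(t)\big|_{t=0^+}$ is exactly the boundary datum appearing in \eqref{Sawi of hilfer-prabhakar}.

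Substituting back, the only thing to check is that the exponents collapse correctly. For the term carrying $\mathcal{T}(s)$ the power of $s$ is $\nu(1-\rho) - 1 + (1-\nu)(1-\rho) = (1-\rho) - 1 = -\rho$ and the power of $(1-\omega s^\alpha)$ is $\gamma\nu + \gamma(1-\nu) = \gamma$; for the boundary term the power of $s$ is $\nu(1-\rho) - 2$ and the power of $(1-\omega s^\alpha)$ is $\gamma\nu$, which match the claimed identity. The point needing a word of care is not the algebra but the applicability of \eqref{Sawi of prabhakar integral} here: it was stated for Prabhakar integrals with positive-order parameters, whereas the inner and outer integrals above carry a negative third parameter; this is harmless, since $E_{\alpha,\rho}^{\gamma}$ in \eqref{three pera M-L fun} is entire in $\gamma$ and the convolution-plus-series manipulations behind that lemma go through verbatim. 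The remaining subtlety is that $g(0^+)$ must exist, which is precisely what the regularity hypothesis $\psi * e_{\alpha,(1-\nu)(1-\rho),\omega}^{-\gamma(1-\nu)} \in AC^1[a,b]$ in the definition of the Hilfer-Prabhakar derivative guarantees.
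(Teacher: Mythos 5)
Your proposal is correct and follows essentially the same route as the paper's own proof: decompose the Hilfer--Prabhakar derivative into outer Prabhakar integral, first derivative, and inner Prabhakar integral, then apply \eqref{Sawi of prabhakar integral} twice and \eqref{mth Sawi transform} with $m=1$, collecting the exponents to $-\rho$ and $\gamma$. Your added remarks on the validity of the integral formula for negative third parameter and on the existence of $g(0^+)$ are points the paper passes over silently, but they do not change the argument.
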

\begin{proof}Given that the Sawi transform to the function $\psi(t)$ representation as $\mathcal{T}(s)$, by applying the Sawi transform on the Hilfer-Prabhakar fractional derivative (\ref{hilfer-prabhar derivative}) w.r.t.  variable $t$, and using equations (\ref{Sawi of prabhakar integral}), (\ref{mth Sawi transform}), we obtain the given representation
\begin{equation*}
\begin{split}
Sa&[\mathcal{D}_{\alpha,\omega,0^+}^{\gamma,\rho,\nu} \psi(t),s ]\\&
=Sa\left[\left(\mathcal{I}_{\alpha,\nu(1-\rho),\omega,0^+}^{-\gamma\nu} \frac{d}{dt} (\mathcal{I}_{\alpha,(1-\nu)(1-\rho),\omega,0^+}^{-\gamma(1-\nu)}\psi )\right)(t),s\right]\\&
=Sa\left[\mathcal{I}_{\alpha,\nu(1-\rho),\omega,0^+}^{-\gamma\nu} k(t),s  \right],~~where~~ k(t)=\frac{d}{dt}\mathcal{I}_{\alpha,(1-\nu)(1-\rho),\omega,0^+}^{-\gamma(1-\nu)}\psi(t)\\&
=s^{\nu(1-\rho)} (1-\omega s^\alpha)^{\gamma\nu} Sa[k(t),s]\\&
=s^{\nu(1-\rho)} (1-\omega s^\alpha)^{\gamma\nu} \left[s^{-1} Sa[\mathcal{I}_{\alpha,(1-\nu)(1-\rho),\omega,0^+}^{-\gamma (1-\nu)}\psi(t),s ] -s^{-2} \mathcal{I}_{\alpha,(1-\nu)(1-\rho),\omega,0^+}^{-\gamma (1-\nu)}\psi(0^+)  \right]\\&
% = s^{\nu(1-\rho)} (1-\omega s^\alpha)^{\gamma\nu}\\&
% \times\left[s^{(1-\nu)(1-\rho)-1}(1-\omega s^\alpha)^{\gamma (1-\nu)} Sa[\psi(t),s]
% -s^{-2} \mathcal{I}_{\alpha,(1-\nu)(1-\rho),\omega,0^+}^{-\gamma(1-\nu)} \psi(0^+)\right]\\&
= s^{-\rho} (1-\omega s^\alpha)^\gamma \mathcal{T}(s)
-s^{\nu(1-\rho)-2} (1-\omega s^\alpha)^{\gamma \nu}~ \mathcal{I}_{\alpha,(1-\nu)(1-\rho),\omega,0^+}^{-\gamma(1-\nu)}\psi(t)|_{t=0^+}
\end{split}
\end{equation*}
\end{proof}

\begin{thm}\textbf{[Sawi transform of regularized Hilfer-Prabhakar derivative]:} The Sawi transform to the regularized Hilfer-Prabhakar fractional derivative representation in the given format
\begin{equation}\label{Sawi of reg hilfer-prabhakar}
Sa[^C\mathcal{D}_{\alpha,\omega,0^+}^{\gamma,\rho,\nu} \psi(t),s ]=  s^{-\rho} (1-\omega s^\alpha)^\gamma \mathcal{T}(s)
- s^{-\rho-1} (1-\omega s^\alpha)^{\gamma} f(0^+)
\end{equation}
\end{thm}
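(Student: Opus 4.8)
The plan is to exploit the collapsed representation of the regularized Hilfer--Prabhakar derivative recorded on the second line of (\ref{regularized hilfer-prabhar d}), namely
\[
{}^C\mathcal{D}_{\alpha,\omega,0^+}^{\gamma,\rho,\nu}\psi(t)=\mathcal{I}_{\alpha,1-\rho,\omega,0^+}^{-\gamma}\,\frac{d}{dt}\psi(t),
\]
so that the operator is nothing but a single Prabhakar fractional integral, of order $1-\rho$ and Mittag--Leffler index $-\gamma$, applied to the ordinary first derivative $\psi'$. Setting $h(t)=\frac{d}{dt}\psi(t)$, the whole computation then reduces to chaining together two facts already established in the excerpt, so that the parameters $\nu$ never reappear.

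First I would invoke the Sawi transform of the Prabhakar fractional integral, equation (\ref{Sawi of prabhakar integral}), with the parameter specialisation $\rho\mapsto 1-\rho$ and $\gamma\mapsto-\gamma$; this gives $Sa[\mathcal{I}_{\alpha,1-\rho,\omega,0^+}^{-\gamma}h(t),s]=s^{\,1-\rho}(1-\omega s^\alpha)^{\gamma}\,Sa[h(t),s]$. Next I would substitute the $m=1$ instance of the derivative rule (\ref{mth Sawi transform}), that is $Sa[h(t),s]=Sa[\psi'(t),s]=s^{-1}\mathcal{T}(s)-s^{-2}\psi(0^+)$. Distributing the prefactor $s^{1-\rho}(1-\omega s^\alpha)^{\gamma}$ over the bracket collapses the exponents to $-\rho$ and $-\rho-1$ respectively, producing exactly
\[
Sa[{}^C\mathcal{D}_{\alpha,\omega,0^+}^{\gamma,\rho,\nu}\psi(t),s]=s^{-\rho}(1-\omega s^\alpha)^{\gamma}\mathcal{T}(s)-s^{-\rho-1}(1-\omega s^\alpha)^{\gamma}\psi(0^+),
\]
which is (\ref{Sawi of reg hilfer-prabhakar}) once the boundary value written $f(0^+)$ is identified with $\psi(0^+)$, the initial value of the function to which the operator is applied.

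I do not expect a genuine obstacle: the argument is a two-line splicing of the Prabhakar-integral lemma (\ref{Sawi of prabhakar integral}) and the derivative rule (\ref{mth Sawi transform}), with the rest being bookkeeping of exponents of $s$. The only point worth stating carefully in the write-up is that the hypotheses must be strong enough to license both steps: one wants $\psi$ absolutely continuous on $[0,b]$ (so that $\psi'\in L^1$ and the one-sided limit $\psi(0^+)$ is meaningful), together with the standing conditions $Re(\alpha)>0$ and $Re(\rho)>0$ on the parameters, which are precisely what underpins the convolution identity (\ref{convolution}) used inside (\ref{Sawi of prabhakar integral}). The conceptual remark worth making is that, because the $\nu$-dependence already cancels in (\ref{regularized hilfer-prabhar d}), the final formula carries a single initial-value term with the full index $\gamma$, in contrast with the non-regularized case (\ref{Sawi of hilfer-prabhakar}) where the boundary term involves $\gamma\nu$ and the intermediate Prabhakar integral of $\psi$.
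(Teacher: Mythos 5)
Your proposal is correct and follows essentially the same route as the paper's own proof: both use the collapsed form $\mathcal{I}_{\alpha,1-\rho,\omega,0^+}^{-\gamma}\frac{d}{dt}\psi(t)$ from (\ref{regularized hilfer-prabhar d}), apply the Prabhakar-integral lemma (\ref{Sawi of prabhakar integral}) to the auxiliary function $\psi'$, and then substitute the $m=1$ case of the derivative rule (\ref{mth Sawi transform}). Your identification of the paper's $f(0^+)$ with $\psi(0^+)$ and your remarks on the hypotheses are correct and, if anything, more careful than the original.
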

\begin{proof}Given that the Sawi transform to the function $\psi(t)$ representation as $\mathcal{T}(s)$, by applying the Sawi transform on the regularized Hilfer-Prabhakar fractional derivative (\ref{regularized hilfer-prabhar d}) w.r.t.  variable $t$, and using equations (\ref{Sawi of prabhakar integral}), (\ref{mth Sawi transform}), and convolution (\ref{convolution} ) of Sawi transform, we obtain the given representation
\begin{equation*}
\begin{split}\label{21}
Sa[^C\mathcal{D}_{\alpha,\omega,0^+}^{\gamma,\rho,\nu} \psi(t),s ]&
=Sa\left[\mathcal{I}_{\alpha,1-\rho,\omega,0^+}^{-\gamma} \frac{d}{dt}\psi(t) ,s\right]\\&
=Sa\left[\mathcal{I}_{\alpha,1-\rho,\omega,0^+}^{-\gamma} z(t) ,s\right], ~~z(t)=\frac{d}{dt}\psi(t)\\&
=s^{1-\rho}(1-\omega s^\alpha)^\gamma Sa[z(t),s]\\&
=s^{1-\rho}(1-\omega s^\alpha)^\gamma [s^{-1} Sa[\psi(t),s]-s^{-2}f(0^+)]\\&
=s^{-\rho}(1-\omega s^\alpha)^\gamma \mathcal{T}(s)-s^{-\rho-1}(1-\omega s^\alpha)^\gamma f(0^+)\\&
\end{split}
\end{equation*}
\end{proof}

\section{Applications}
This section presents the application of Sawi transform on Hilfer-Prabhakar and regularized Hilfer-Prabhakar fractional derivatives to solve Cauchy-type fractional differential equations,
\begin{thm}
This theorem presents the solution for the generalized Cauchy-type problem for the fractional advection dispersion equation
\begin{equation}\label{hilfer-prabhakar problem with weyl}
\mathcal{D}_{\alpha,\omega,0^+}^{\gamma, \rho,\nu}\psi(x,t)=-p \mathcal{D}_{x} \psi(x,t) + \vartheta~ \Delta^\frac{\lambda}{2} \psi(x,t) 
\end{equation}
subjects to below constraints
\begin{equation}
\mathcal{I}_{\alpha,(1-\nu)(1-\rho),\omega,0^+}^{-\gamma(1-\nu)}  \psi(x,0^+)=g(x), ~~\omega,~\gamma, ~x\in \mathbb{R},~\alpha>0,
\end{equation}
\begin{equation}
\lim _{x\to\infty}\psi(x,t)=0,~~t\geq 0,~~ 
\end{equation}
is obtained
\begin{equation}
\psi(x,t)=\frac{1}{2\pi}\int_{-\infty}^{\infty}\sum_{n=0}^{\infty}t^{\nu(1-\rho)+n\rho+\rho-1}e^{(-ikx)} g(k)(i p k-\vartheta|k|^\lambda)^n E_{\alpha,\nu(1-\rho)+\rho(n+1)}^{\gamma(1+n)-\gamma \nu}(\omega t^\alpha)dk
\end{equation}
In the solution, $\Delta^\frac{\lambda}{2}$ represents the fractional generalized Laplace operator of order $\lambda$, where $\lambda$ is in the range of (0,2), $\rho$ is in the range of (0,1), $\nu$ is in the range of [0,1], $x$ is in $\mathbb{R}$, $t$ is in $\mathbb{R^+}$, and $\gamma$ is greater than 0. The Fourier transform of $ \Delta^\frac{\lambda}{2} $ is $-|k|^\lambda$, as discussed in the publication \cite{agarwal2016}
\end{thm}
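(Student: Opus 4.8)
The strategy is to turn the partial fractional equation into a scalar algebraic equation by applying, in succession, the Fourier transform in $x$ and the Sawi transform in $t$, and then to invert both. Write $\psi^{*}(k,t)=F[\psi(x,t),k]$. Using the convention (\ref{fourier tranform}), the advection term transforms as $-p\,\mathcal{D}_{x}\psi \mapsto ipk\,\psi^{*}(k,t)$, and, with the stated Fourier symbol $-|k|^{\lambda}$ of $\Delta^{\lambda/2}$, the dispersion term transforms as $\vartheta\,\Delta^{\lambda/2}\psi \mapsto -\vartheta|k|^{\lambda}\psi^{*}(k,t)$; since $F$ acts only in $x$ it commutes with $\mathcal{D}_{\alpha,\omega,0^{+}}^{\gamma,\rho,\nu}$. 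Hence (\ref{hilfer-prabhakar problem with weyl}) becomes
\begin{equation*}
\mathcal{D}_{\alpha,\omega,0^{+}}^{\gamma,\rho,\nu}\psi^{*}(k,t)=(ipk-\vartheta|k|^{\lambda})\,\psi^{*}(k,t),
\end{equation*}
and the constraint Fourier-transforms to $\mathcal{I}_{\alpha,(1-\nu)(1-\rho),\omega,0^{+}}^{-\gamma(1-\nu)}\psi^{*}(k,0^{+})=g(k)$.

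Next I would apply the Sawi transform in $t$ and use Theorem (\ref{Sawi of hilfer-prabhakar}) for $Sa[\mathcal{D}_{\alpha,\omega,0^{+}}^{\gamma,\rho,\nu}\psi^{*}(k,t),s]$; the boundary term appearing there is exactly $\mathcal{I}_{\alpha,(1-\nu)(1-\rho),\omega,0^{+}}^{-\gamma(1-\nu)}\psi^{*}(k,0^{+})=g(k)$. Writing $\widetilde{\psi}(k,s)$ for the Sawi transform of $\psi^{*}(k,\cdot)$, this gives
\begin{equation*}
s^{-\rho}(1-\omega s^{\alpha})^{\gamma}\,\widetilde{\psi}(k,s)-s^{\nu(1-\rho)-2}(1-\omega s^{\alpha})^{\gamma\nu}g(k)=(ipk-\vartheta|k|^{\lambda})\,\widetilde{\psi}(k,s).
\end{equation*}
Solving for $\widetilde{\psi}(k,s)$ and multiplying through by $s^{\rho}(1-\omega s^{\alpha})^{-\gamma}$,
\begin{equation*}
\widetilde{\psi}(k,s)=\frac{s^{\nu(1-\rho)+\rho-2}(1-\omega s^{\alpha})^{-\gamma(1-\nu)}g(k)}{1-(ipk-\vartheta|k|^{\lambda})\,s^{\rho}(1-\omega s^{\alpha})^{-\gamma}}.
\end{equation*}
For $s$ in a neighbourhood of $0$ the quantity $(ipk-\vartheta|k|^{\lambda})s^{\rho}(1-\omega s^{\alpha})^{-\gamma}$ has modulus less than $1$, so expanding the denominator in a geometric series yields
\begin{equation*}
\widetilde{\psi}(k,s)=g(k)\sum_{n=0}^{\infty}(ipk-\vartheta|k|^{\lambda})^{n}\,s^{\,\nu(1-\rho)+\rho(n+1)-2}\,(1-\omega s^{\alpha})^{-(\gamma(1+n)-\gamma\nu)}.
\end{equation*}

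By Lemma (\ref{Sawi of 3 pera M-L function }), with $\rho$ replaced by $\nu(1-\rho)+\rho(n+1)$ and $\gamma$ replaced by $\gamma(1+n)-\gamma\nu$, the $n$-th summand is precisely the Sawi transform of $t^{\nu(1-\rho)+n\rho+\rho-1}E_{\alpha,\nu(1-\rho)+\rho(n+1)}^{\gamma(1+n)-\gamma\nu}(\omega t^{\alpha})$. Inverting the Sawi transform term by term recovers $\psi^{*}(k,t)$ as the claimed series in $n$, and applying the inverse Fourier transform (\ref{inverse of fourier}) in $k$ gives the stated double representation for $\psi(x,t)$. The algebraic part — collecting powers of $s$ and of $1-\omega s^{\alpha}$ and reading off the shifted Mittag-Leffler indices — is routine; the real obstacle is analytic: justifying the term-by-term Sawi inversion and the interchange of the $n$-summation with both the inversion and the outer $k$-integral. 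This needs growth bounds on the three-parameter Mittag-Leffler functions as their second and third parameters grow linearly in $n$, together with suitable decay/integrability of $g(k)$, so that the series converges uniformly on the relevant region; since the geometric expansion is valid only in a strip of the transform variable, one also invokes analyticity and uniqueness of the transform to propagate the identity. As is customary in this setting, I would record these convergence conditions rather than prove them in full detail, the remainder being a direct double-transform computation.
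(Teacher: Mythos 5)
Your proposal is correct and follows essentially the same route as the paper: Fourier transform in $x$, Sawi transform in $t$ via the Hilfer--Prabhakar transform formula, geometric-series expansion of the resolvent, and term-by-term inversion using the Sawi transform of $t^{\rho-1}E_{\alpha,\rho}^{\gamma}(\omega t^{\alpha})$, with all exponents matching the paper's. Your added remarks on justifying the term-by-term inversion and the interchange of summation with the $k$-integral go beyond what the paper records, which simply imposes the condition $\bigl|(\vartheta|k|^{\lambda}-ipk)s^{\rho}(1-\omega s^{\alpha})^{-\gamma}\bigr|<1$ and inverts formally.
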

\begin{proof}The function $\overline \psi(x,s)$ represents the Sawi transform of $\psi(x,t)$ w.r.t.  variable $t$, and $\psi^*(k,t)$ represents the Fourier transform of $\psi(x,t)$ w.r.t.  variable $x$. By applying the Fourier and Sawi transform on equation (\ref{hilfer-prabhakar problem with weyl}), using equations (\ref{Sawi of hilfer-prabhakar}), (\ref{fourier of weyl}), the transformed solution is obtained,
first we will apply Sawi and Fourier transform on left hand side
\begin{equation}
\begin{split}
LHS& = s^{-\rho}(1-\omega s^\alpha)^\gamma \overline \psi^*(k,s)-s^{\nu(1-\rho)-2}(1-\omega s^\alpha)^{\gamma \nu}\mathcal{I}_{\alpha,(1-\nu)(1-\rho),\omega,0^+}^{-\gamma(1-\nu)}\psi^*(k,0)\\&
=s^{-\rho}(1-\omega s^\alpha)^\gamma \overline \psi^*(k,s)-s^{\nu(1-\rho)-2}(1-\omega s^\alpha)^{\gamma \nu}g^*(k)
\end{split}
\end{equation}
now we will apply the Sawi and Fourier transform on right hand side
\begin{equation}
RHS=  i p k \overline\psi^*(k,s)-\vartheta |k|^\lambda \overline\psi^*(k,s)
\end{equation}
therefore, we will get
\begin{equation*}
\begin{split}
s^{-\rho}&(1-\omega s^\alpha)^\gamma \overline \psi^*(k,s)
-s^{\nu(1-\rho)-2}(1-\omega s^\alpha)^{\gamma \nu}g^*(k)
=i p k \overline\psi^*(k,s)-\vartheta |k|^\lambda \overline\psi^*(k,s)\\&
\overline \psi^*(k,s)[s^{-\rho}(1-\omega s^\alpha)^\gamma +\vartheta|k|^\lambda - i p k]=s^{\nu(1-\rho)-2}(1-\omega s^\alpha)^{\gamma \nu}g^*(k)\\&
\overline \psi^*(k,s)=\frac{s^{\nu(1-\rho)-2} (1-\omega s^\alpha)^{\gamma \nu}g^*(k) } 
{ s^{-\rho}(1-\omega s^\alpha)^\gamma   \left[1+\frac{\vartheta |k|^\lambda -i p k}{ s^{-\rho}(1-\omega s^\alpha)^\gamma}\right] }\\&
\overline \psi^*(k,s)=s^{\nu(1-\rho)+\rho-2}(1-\omega s^\alpha)^{\gamma \nu-\gamma}g^*(k)\sum_{n=0}^{\infty}\left[\frac{-\vartheta |k|^\lambda + i p k}{ s^{-\rho}(1-\omega s^\alpha)^\gamma}\right]^n\\&
\overline \psi^*(k,s)=\sum_{n=0}^{\infty}( i p k-\vartheta |k|^\lambda )^n s^{\nu(1-\rho)+\rho +\rho n-2}(1-\omega s^\alpha)^{\gamma \nu-\gamma n-\gamma}g^*(k)
\end{split}
\end{equation*}
for $\frac{\vartheta |k|^\lambda -ip k}{ s^{-\rho}(1-\omega s^\alpha)^\gamma} <1$, applying the inverse of Fourier $\psi^{-1}[\psi^*(k)] =\frac{1}{2\pi}\int_{-\infty}^{\infty}e^{-i k x}\psi(k)dk $ and Sawi $Sa^{-1}[s^{\rho -2}(1-\omega s^\alpha)^{-\gamma}]= t^{\rho-1}E_{\alpha,\rho}^{\gamma}(\omega t^\alpha) $ transforms and we will get the desired result
\begin{equation*}
\psi(x,t)=\frac{1}{2\pi}\int_{-\infty}^{\infty}\sum_{n=0}^{\infty}t^{\nu(1-\rho)+n\rho+\rho-1}e^{(-ikx)} g(k)(ipk-\vartheta|k|^\lambda)^n E_{\alpha,\nu(1-\rho)+\rho(n+1)}^{\gamma(1+n)-\gamma \nu}(\omega t^\alpha)dk
\end{equation*}
\end{proof}

% \textbf{Remark 1:} If we take $p=0$ and $\vartheta= \frac{ih}{2m}$ in the abobe equation (\ref{hilfer-prabhakar problem with weyl}), the result will reach to one dimensional space time Schrodinger fractional equation for mass $m$ and plank constant $h$.
% \begin{equation}
% \psi(x,t)=\frac{1}{2\pi}\int_{-\infty}^{\infty}\sum_{n=0}^{\infty}t^{\nu(1-\rho)+n\rho-1}e^{(-ikx)} g(k)(-\frac{ih}{2m}|k|^\lambda) E_{\alpha,\nu(1-\rho)+\rho(n+1)}^{\gamma(1+n)-\gamma \nu}(\omega t^\alpha)dk
% \end{equation}

\begin{thm}
This theorem provides a solution for the generalized Cauchy-type problem for the fractional advection dispersion equation.
\begin{equation}\label{regu hilfer-prabhakar problem with p and rho}
^C\mathcal{D}_{\alpha,\omega,0^+}^{\gamma, \rho,\nu}\psi(x,t)=-p \mathcal{D}_{x} \psi(x,t) + \vartheta~ \Delta^\frac{\lambda}{2} \psi(x,t) 
\end{equation}
subjects to below constraints
\begin{equation}
 \psi(x,0^+)=g(x), ~x\in \mathbb{R},
 \end{equation}
 \begin{equation}
\lim _{|x|\to\infty}\psi(x,t)=0,~~t\geq 0,~~ 
\end{equation}
is obtained
\begin{equation}
\psi(x,t)=\frac{1}{2\pi}\int_{-\infty}^{\infty}\sum_{n=0}^{\infty}t^{\rho n}e^{(-ikx)} g(k)(i p k-\vartheta|k|^\lambda) E_{\alpha,\rho(n+1)}^{\gamma n}(\omega t^\alpha)dk
\end{equation}
In the solution, $\Delta^\frac{\lambda}{2}$ represents the fractional generalized Laplace operator of order $\lambda$, where $\lambda$ is in the range of (0,2), $\rho$ is in the range of (0,1), $\nu$ is in the range of [0,1], $x$ is in $\mathbb{R}$, $t$ is in $\mathbb{R^+}$, and $\gamma$ is greater than 0, and Fourier transform of $ \Delta^\frac{\lambda}{2} $ is $-|k|^\lambda$ discussed in \cite{agarwal2016}
\end{thm}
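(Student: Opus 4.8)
The plan is to follow the blueprint of the preceding theorem, with the Sawi transform of the Hilfer--Prabhakar derivative replaced by that of its regularized version \eqref{Sawi of reg hilfer-prabhakar}. Let $\overline{\psi}(x,s)$ denote the Sawi transform of $\psi(x,t)$ with respect to $t$, let $\psi^{*}(k,t)$ denote the Fourier transform with respect to $x$, and write $\overline{\psi}^{*}(k,s)$ for the doubly transformed function. First I would apply the Fourier transform in $x$ and the Sawi transform in $t$ to \eqref{regu hilfer-prabhakar problem with p and rho}. On the left-hand side, \eqref{Sawi of reg hilfer-prabhakar} applies, and since the initial datum is $\psi(x,0^{+})=g(x)$ the boundary term becomes $g^{*}(k)$; on the right-hand side, the elementary Fourier rule $F[\mathcal{D}_{x}\psi]=-ik\,\psi^{*}$ (justified by the decay hypothesis $\lim_{|x|\to\infty}\psi(x,t)=0$) together with \eqref{fourier of weyl} and the symbol $-|k|^{\lambda}$ of $\Delta^{\lambda/2}$ produces $(ipk-\vartheta|k|^{\lambda})\,\overline{\psi}^{*}(k,s)$. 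Equating the two sides gives
\begin{equation*}
\overline{\psi}^{*}(k,s)\,\big[\,s^{-\rho}(1-\omega s^{\alpha})^{\gamma}+\vartheta|k|^{\lambda}-ipk\,\big]=s^{-\rho-1}(1-\omega s^{\alpha})^{\gamma}g^{*}(k).
\end{equation*}

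Next I would solve for $\overline{\psi}^{*}(k,s)$, factor $s^{-\rho}(1-\omega s^{\alpha})^{\gamma}$ out of the bracket, and, under the smallness condition $\frac{\vartheta|k|^{\lambda}-ipk}{s^{-\rho}(1-\omega s^{\alpha})^{\gamma}}<1$ stated in the theorem, expand the remaining factor as a geometric series in that quantity. Collecting powers of $s$ then yields
\begin{equation*}
\overline{\psi}^{*}(k,s)=\sum_{n=0}^{\infty}\big(ipk-\vartheta|k|^{\lambda}\big)^{n}\,s^{\rho n-1}\,(1-\omega s^{\alpha})^{-\gamma n}\,g^{*}(k).
\end{equation*}

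Finally I would invert both transforms term by term. Writing $s^{\rho n-1}=s^{(\rho n+1)-2}$ and applying the inverse Sawi transform through \eqref{Sawi of 3 pera M-L function } with the substitutions $\rho\mapsto\rho n+1$, $\gamma\mapsto\gamma n$ turns the $n$-th summand into $g^{*}(k)(ipk-\vartheta|k|^{\lambda})^{n}\,t^{\rho n}E_{\alpha,\rho n+1}^{\gamma n}(\omega t^{\alpha})$, after which the inverse Fourier transform \eqref{inverse of fourier} restores the spatial variable and produces the asserted representation of $\psi(x,t)$. The exponent bookkeeping is routine and parallels the previous proof; the step I expect to be the genuine obstacle is the analytic justification of these formal moves, namely (i) that the geometric expansion is valid, i.e.\ that $\frac{\vartheta|k|^{\lambda}-ipk}{s^{-\rho}(1-\omega s^{\alpha})^{\gamma}}$ is indeed small on the range of $(k,s)$ that matters, and (ii) that the infinite sum can be interchanged with the inverse Sawi and inverse Fourier integrals, which needs suitable decay of $g^{*}(k)$ together with a uniform-in-$n$ estimate on the Mittag--Leffler factors $E_{\alpha,\rho n+1}^{\gamma n}(\omega t^{\alpha})$ for $t$ in compact sets.
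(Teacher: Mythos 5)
Your proposal is correct and follows essentially the same route as the paper: Fourier--Sawi transformation of both sides using \eqref{Sawi of reg hilfer-prabhakar}, geometric-series expansion of $\bigl[1+\tfrac{\vartheta|k|^{\lambda}-ipk}{s^{-\rho}(1-\omega s^{\alpha})^{\gamma}}\bigr]^{-1}$, and termwise inversion via \eqref{Sawi of 3 pera M-L function } and \eqref{inverse of fourier}. Your exponent $(ipk-\vartheta|k|^{\lambda})^{n}$ is in fact what the derivation yields (the first power appearing in the theorem's displayed solution is a typo in the paper), and your closing remarks on justifying the series expansion and the interchange of sum and integral flag genuine gaps that the paper itself leaves unaddressed.
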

\begin{proof} The function $\overline \psi(x,s)$ represents the Sawi transform of $\psi(x,t)$ w.r.t.  variable $t$, and $\psi^*(k,t)$ represents the Fourier transform of $\psi(x,t)$ w.r.t.  variable $x$. By applying the Fourier and Sawi transform on equation (\ref{regu hilfer-prabhakar problem with p and rho}), using equations (\ref{Sawi of reg hilfer-prabhakar}), (\ref{fourier of weyl}), the transformed solution is obtained
\begin{equation*}
\begin{split}
s^{-\rho}&(1-\omega s^\alpha)^\gamma \overline \psi^*(k,s)-s^{-\rho-1}(1-\omega s^\alpha)^\gamma \psi ^*(k,0)=ipk \overline \psi^*(k,s)-\vartheta|k|^\lambda \overline \psi^*(k,s)\\&
\overline \psi^*(k,s)[s^{-\rho}(1-\omega s^\alpha)^\gamma+\vartheta|k|^\lambda-ipk]=s^{-\rho-1}(1-\omega s^\alpha)^\gamma g^*(k)\\&
\overline \psi^*(k,s)=\frac{s^{-\rho-1}(1-\omega s^\alpha)^\gamma g^*(k)}{s^{-\rho}(1-\omega s^\alpha)^\gamma \left[1+\frac{\vartheta|k|^\lambda-ipk}{s^{-\rho}(1-\omega s^\alpha)^\gamma} \right] }\\&
\overline \psi^*(k,s)=s^{-1}g^*(k)\sum_{n=0}^\infty \left[\frac{ipk-\vartheta|k|^\lambda}{s^{-\rho}(1-\omega s^\alpha)^\gamma} \right]^n\\& 
\overline \psi^*(k,s)=g^*(k)\sum_{n=0}^\infty(ipk-\vartheta|k|^\lambda)^n s^{\rho n-1}(1-\omega s^\alpha)^{-\gamma n}
\end{split}
\end{equation*}
for $\left[\frac{\vartheta|k|^\lambda-ipk}{s^{-\rho}(1-\omega s^\alpha)^\gamma} \right] <1 $, taking the inverse of Sawi and Fourier on both sides of above equation, we will get the final result
\begin{equation*}
\psi(x,t)=\frac{1}{2\pi}\int_{-\infty}^{\infty}\sum_{n=0}^{\infty}t^{\rho n}e^{(-ikx)} g(k)(i p k-\vartheta|k|^\lambda) E_{\alpha,\rho(n+1)}^{\gamma n}(\omega t^\alpha)dk
\end{equation*}

\end{proof}

This section examines the Fractional Free Electron Laser (FEL) equation that includes the Hilfer-Prabhakar fractional derivative.
\begin{thm}
This section presents the solution for the generalized Cauchy-type problem for the fractional heat equation
\begin{equation} \label{reg hilfer-prabhakar problem with N}
^C\mathcal{D} _{\alpha,\omega,0^+}^{\gamma,\rho,\nu} \psi(x,t)
= N \frac{{\partial}^2}{{\partial} x^2} \psi(x,t)
\end{equation}
subject to the initial condition
\begin{equation}\label{f(x,0)=g(x)}
\psi(x,0)=g(x)
\end{equation}
$$\lim_{x\to\infty}\psi(x,t)=0$$
\end{thm}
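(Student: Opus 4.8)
The plan is to reproduce, in this special case, the pattern of the two preceding applications: remove the spatial operator with a Fourier transform in $x$, remove the regularized Hilfer--Prabhakar operator with a Sawi transform in $t$, solve the resulting algebraic relation for the double transform, and invert both transforms. Write $\overline{\psi}(x,s)$ for the Sawi transform of $\psi(x,t)$ in $t$, $\psi^{*}(k,t)$ for the Fourier transform of $\psi(x,t)$ in $x$, and $\overline{\psi}^{*}(k,s)$ for the joint transform; since the two transforms act on different variables they may be applied to both sides in either order. Applying the Fourier transform (\ref{fourier tranform}) to (\ref{reg hilfer-prabhakar problem with N}) turns $N\frac{\partial^{2}}{\partial x^{2}}\psi(x,t)$ into $-Nk^{2}\psi^{*}(k,t)$ --- the $\lambda=2$ instance of the symbol $-|k|^{\lambda}$ of $\Delta^{\lambda/2}$, equivalently two applications of (\ref{fourier of weyl}). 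Applying then the Sawi transform in $t$ and using the formula (\ref{Sawi of reg hilfer-prabhakar}) for the regularized Hilfer--Prabhakar derivative on the left, with $f(0^{+})$ replaced by $g^{*}(k)$ through the initial condition (\ref{f(x,0)=g(x)}), gives
\[
s^{-\rho}(1-\omega s^{\alpha})^{\gamma}\,\overline{\psi}^{*}(k,s)-s^{-\rho-1}(1-\omega s^{\alpha})^{\gamma}g^{*}(k)=-Nk^{2}\,\overline{\psi}^{*}(k,s),
\]
so that
\[
\overline{\psi}^{*}(k,s)=\frac{s^{-\rho-1}(1-\omega s^{\alpha})^{\gamma}g^{*}(k)}{s^{-\rho}(1-\omega s^{\alpha})^{\gamma}+Nk^{2}}.
\]

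Next I would pull $s^{-\rho}(1-\omega s^{\alpha})^{\gamma}$ out of the denominator and expand the remaining factor as a Neumann (geometric) series, valid for $Nk^{2}<s^{-\rho}(1-\omega s^{\alpha})^{\gamma}$ exactly as in the two earlier proofs, obtaining
\[
\overline{\psi}^{*}(k,s)=g^{*}(k)\sum_{n=0}^{\infty}(-Nk^{2})^{n}\,s^{\rho n-1}(1-\omega s^{\alpha})^{-\gamma n}.
\]
Inverting the Sawi transform term by term via the Sawi image of the three-parameter Mittag--Leffler function (\ref{Sawi of 3 pera M-L function }), i.e. $Sa^{-1}\!\left[s^{\rho-2}(1-\omega s^{\alpha})^{-\gamma}\right]=t^{\rho-1}E_{\alpha,\rho}^{\gamma}(\omega t^{\alpha})$, with the parameter replacements $\rho\mapsto\rho n+1$ and $\gamma\mapsto\gamma n$, yields $Sa^{-1}\!\left[s^{\rho n-1}(1-\omega s^{\alpha})^{-\gamma n}\right]=t^{\rho n}E_{\alpha,\rho n+1}^{\gamma n}(\omega t^{\alpha})$. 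Finally the inverse Fourier transform (\ref{inverse of fourier}) delivers the solution
\[
\psi(x,t)=\frac{1}{2\pi}\int_{-\infty}^{\infty}\sum_{n=0}^{\infty}(-Nk^{2})^{n}\,t^{\rho n}\,e^{-ikx}\,g(k)\,E_{\alpha,\rho n+1}^{\gamma n}(\omega t^{\alpha})\,dk,
\]
writing, as elsewhere in the paper, $g(k)$ for the Fourier transform $g^{*}(k)$ of $g$.

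The algebra above is routine; the step that needs real care is the legitimacy of the term-by-term inversion. One should check that the Neumann series converges in an appropriate right half-plane of the Sawi variable $s$ (uniformly in $k$ on bounded sets), that $Sa^{-1}$ and the Fourier inversion may be interchanged with the sum over $n$, and that the resulting series in $t$ --- which, after expanding each $E_{\alpha,\rho n+1}^{\gamma n}(\omega t^{\alpha})$, is a double power series --- converges absolutely and uniformly on compact $t$-intervals, so that it genuinely defines a function rather than a purely formal object. These points follow from the entireness of $E_{\alpha,\rho}^{\gamma}$ together with standard growth bounds on the Pochhammer symbols $(\gamma n)_{j}$ and on $1/\Gamma(\alpha j+\rho n+1)$, but they constitute the only non-mechanical part of the argument. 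The interchanges under the Fourier integral sign and the differentiations needed to test the equation require only mild hypotheses on the data $g$ (for instance $g\in L^{1}(\mathbb{R})$ with $k^{2}g^{*}\in L^{1}(\mathbb{R})$); granted these, one verifies directly that the candidate satisfies (\ref{reg hilfer-prabhakar problem with N}), the initial condition $\psi(x,0)=g(x)$ (only the $n=0$ term survives at $t=0$, and $E_{\alpha,1}^{0}\equiv 1$), and the decay condition $\lim_{x\to\infty}\psi(x,t)=0$, which would complete the argument.
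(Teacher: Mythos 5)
Your proposal is correct and follows essentially the same route as the paper: Fourier transform in $x$, Sawi transform in $t$ via (\ref{Sawi of reg hilfer-prabhakar}), geometric-series expansion of the resolvent, and term-by-term inversion through (\ref{Sawi of 3 pera M-L function }), arriving at the identical series solution. Your closing remarks on convergence and the legitimacy of the term-by-term inversion go beyond what the paper verifies (it treats these steps formally), but they do not alter the method.
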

$with~~\rho\in(0,1),~\nu[0,1];~\omega, x \in \mathbb{R};~ N, \alpha>0,\gamma\geq 0,
~~is~~ given~~ by$
\begin{equation}
\begin{split}
\psi(x,t)
= \frac{1}{2\pi}\int_{-\infty}^{\infty}\sum_{n=0}^{\infty}e^{-ikx} E_{\alpha,\rho n +1}^{\gamma n}(\omega t^\alpha)(-Nk^2)^n t^{\rho n} g(k)dk
\end{split}
\end{equation}
\begin{proof} The function $\overline \psi(x,s)$ represents the Sawi transform of $\psi(x,t)$ w.r.t.  variable $t$, and $\psi^*(k,t)$ represents the Fourier transform of $\psi(x,t)$ w.r.t.  variable $x$. By applying the Fourier and Sawi transform on equation (\ref{reg hilfer-prabhakar problem with N}), using equations (\ref{Sawi of reg hilfer-prabhakar}), (\ref{f(x,0)=g(x)}), (\ref{inverse of fourier}), the transformed solution is obtained
\begin{equation*}
\begin{split}
s^{-\rho}&(1-\omega s^\alpha)^\gamma \overline \psi^*(k,s) -s^{-\rho-1}(1-\omega s^\alpha)^\gamma \psi^*(k,0)=-N k^2 \overline \psi^*(k,s)\\&
\overline \psi^*(k,s)[s^{-\rho}(1-\omega s^\alpha)^\gamma+N k^2 ]= s^{-\rho-1} (1-\omega s^\alpha)^\gamma g^*(k),\\&
\overline \psi^*(k,s)= \frac{ s^{-\rho-1} (1-\omega s^\alpha)^\gamma g^*(k)}
{s^{-\rho}(1-\omega s^\alpha)^\gamma\left[1+\frac{Nk^2}{s^{-\rho}(1-\omega s^\alpha)^\gamma  }    \right]  },\\&
\overline \psi^*(k,s)=s^{-1} g^*(k) \sum_{n=0}^{\infty}\left[\frac{-Nk^2}{s^{-\rho}(1-\omega s^\alpha)^\gamma  }    \right]^n,\\&
\overline \psi^*(k,s)=s^{-1} g^*(k) \sum_{n=0}^{\infty} (-Nk^2)^n s^{\rho n} (1-\omega s^\alpha)^{-\gamma n}g^*(k),\\&
\overline \psi^*(k,s)= \sum_{n=0}^{\infty}(-N k^2)^n s^{\rho n-1}(1-\omega s^\alpha)^{-\gamma n} g^*(k),
\end{split}
\end{equation*}
for $\frac{Nk^2}{s^{-\rho}(1-\omega s^\alpha)^\gamma }<1 $, applying the inverse of Sawi and Fourier transforms on both sides of the above equation and we will get the final result
\begin{equation*}
\psi(x,t)
= \frac{1}{2\pi}\int_{-\infty}^{\infty}\sum_{n=0}^{\infty}e^{-ikx} E_{\alpha,\rho n +1}^{\gamma n}(\omega t^\alpha)(-Nk^2)^n t^{\rho n} g(k)dk
\end{equation*}
\end{proof}

\begin{thm}
This theorem presents the solution for the generalized Cauchy-type problem for the fractional heat equation,
\begin{equation} \label{hilfer-prabhakar prob with Msec}
\mathcal{D} _{\alpha,\omega,0^+}^{\gamma,\rho,\nu} \psi(x,t)
= M \frac{\partial^2}{\partial x^2} \psi(x,t),
\end{equation}
\begin{equation}\label{condition of g(x)}
\mathcal{I}_{\alpha,(1-\nu)(1-\rho),\omega,0^+}^{-\gamma(1-\nu)}\psi(x,t)|_{t=0}=g(x),
\end{equation}
$$\lim_{x\to\infty}\psi(x,t)=0,$$
$with~~\rho\in(0,1),~\nu[0,1];~\omega, x \in \mathbb{R};~ M, \alpha>0,\gamma\geq 0,
~~is~~ given~~ by$
\begin{equation}
\begin{split}
\psi(x,t)
= \frac{1}{2\pi}\int_{-\infty}^{\infty}\sum_{n=0}^{\infty}e^{-ikx}g(k)(-M k^2)^nt^{\rho(n+1)-\nu(\rho-1)-1} E_{\alpha,\rho(n+1)+\nu(1-\rho)}^{\gamma(n+1-\nu)} dk
\end{split}
\end{equation}
\end{thm}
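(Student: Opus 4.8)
The plan is to mimic the proof of the first application theorem above (the advection--dispersion case), replacing the operator $-p\,\mathcal{D}_x+\vartheta\,\Delta^{\lambda/2}$ by $M\,\partial^2/\partial x^2$ and using the Sawi transform of the \emph{non-regularized} Hilfer--Prabhakar derivative from (\ref{Sawi of hilfer-prabhakar}) in place of (\ref{Sawi of reg hilfer-prabhakar}). Write $\psi^*(k,t)$ for the Fourier transform of $\psi(x,t)$ with respect to $x$ and $\overline{\psi}^*(k,s)$ for its Sawi transform with respect to $t$.

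First I would apply the Fourier transform in $x$ to (\ref{hilfer-prabhakar prob with Msec}); since $F\{\partial_x^2\psi\}=-k^2\psi^*(k,t)$, the right-hand side becomes $-Mk^2\psi^*(k,t)$, and the constraint (\ref{condition of g(x)}) becomes $\mathcal{I}_{\alpha,(1-\nu)(1-\rho),\omega,0^+}^{-\gamma(1-\nu)}\psi^*(k,0)=g^*(k)$. Then, applying the Sawi transform in $t$ and invoking (\ref{Sawi of hilfer-prabhakar}), the transformed equation reads
\[
s^{-\rho}(1-\omega s^\alpha)^\gamma\,\overline{\psi}^*(k,s)-s^{\nu(1-\rho)-2}(1-\omega s^\alpha)^{\gamma\nu}g^*(k)=-Mk^2\,\overline{\psi}^*(k,s),
\]
hence
\[
\overline{\psi}^*(k,s)\bigl[s^{-\rho}(1-\omega s^\alpha)^\gamma+Mk^2\bigr]=s^{\nu(1-\rho)-2}(1-\omega s^\alpha)^{\gamma\nu}g^*(k).
\]

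Next I would solve for $\overline{\psi}^*(k,s)$, factor $s^{-\rho}(1-\omega s^\alpha)^\gamma$ out of the bracket, and---under the hypothesis $Mk^2\,s^{\rho}(1-\omega s^\alpha)^{-\gamma}<1$---expand the remaining factor as a geometric series, which gives
\[
\overline{\psi}^*(k,s)=g^*(k)\sum_{n=0}^{\infty}(-Mk^2)^n\,s^{\rho(n+1)+\nu(1-\rho)-2}(1-\omega s^\alpha)^{-\gamma(n+1-\nu)}.
\]
Finally I would invert term by term: by the Lemma yielding (\ref{Sawi of 3 pera M-L function }), read as $Sa^{-1}\!\left[s^{\rho-2}(1-\omega s^\alpha)^{-\gamma}\right]=t^{\rho-1}E_{\alpha,\rho}^{\gamma}(\omega t^\alpha)$ with $\rho\mapsto\rho(n+1)+\nu(1-\rho)$ and $\gamma\mapsto\gamma(n+1-\nu)$, the $n$-th Sawi-inverse equals $t^{\rho(n+1)-\nu(\rho-1)-1}E_{\alpha,\rho(n+1)+\nu(1-\rho)}^{\gamma(n+1-\nu)}(\omega t^\alpha)$; applying the inverse Fourier transform (\ref{inverse of fourier}) to $g^*(k)$ then produces the asserted formula for $\psi(x,t)$.

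The algebra is routine; the main obstacle is the legitimacy of the formal steps---that the geometric expansion converges (the stated smallness condition on $Mk^2$) and that the $\sum$--$\int$ may be interchanged with both the inverse Sawi and the inverse Fourier transforms. As in the preceding application theorems, I would record the required decay hypotheses on $g$ and $g^*$ and carry out the interchange formally rather than establish uniform convergence in detail.
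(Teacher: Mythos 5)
Your proposal is correct and follows essentially the same route as the paper: Fourier transform in $x$, Sawi transform in $t$ via (\ref{Sawi of hilfer-prabhakar}), substitution of the initial condition (\ref{condition of g(x)}), geometric-series expansion under the stated smallness condition, and term-by-term inversion using (\ref{Sawi of 3 pera M-L function }) and (\ref{inverse of fourier}). Your exponents $s^{\rho(n+1)+\nu(1-\rho)-2}(1-\omega s^\alpha)^{-\gamma(n+1-\nu)}$ agree exactly with the paper's $s^{\rho n+\nu(1-\rho)+\rho-2}(1-\omega s^\alpha)^{\gamma\nu-\gamma n-\gamma}$, and your remark that the interchange of sum and integral is carried out formally matches the level of rigor in the paper's own argument.
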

\begin{proof} The function $\overline \psi(x,s)$ represents the Sawi transform of $\psi(x,t)$ w.r.t.  variable $t$, and $\psi^*(k,t)$ represents the Fourier transform of $\psi(x,t)$ w.r.t.  variable $x$. By applying the Fourier and Sawi transform on equation (\ref{hilfer-prabhakar prob with Msec}), using equations (\ref{Sawi of hilfer-prabhakar}), (\ref{condition of g(x)}), (\ref{inverse of fourier}), the transformed solution is obtained,
\begin{equation*}
\begin{split}
s^{-\rho}(1& -\omega s^\alpha )^\gamma \overline \psi^*(k,s)-s^{\nu(1-\rho)-2}(1-\omega s^\alpha)^{\gamma\nu} \mathcal{I}_{\alpha,(1-\nu)(1-\rho),\omega,0^+}^{-\gamma(1-\nu)}f(x,0)
=-Mk^2 \overline \psi^*(k,s)\\&
s^{-\rho}(1-\omega s^\alpha)^\gamma \overline \psi^*(k,s)-s^{\nu(1-\rho)-2}(1-\omega s^\alpha)^{\gamma\nu} g^*(k)
=-Mk^2 \overline \psi^*(k,s)\\&
\overline \psi^*(k,s)[s^{-\rho}(1-\omega s^\alpha)^\gamma+Mk^2]=s^{\nu(1-\rho)-2}(1-\omega s^\alpha)^{\gamma\nu}g^*(k)\\&
\overline \psi^*(k,s)=\frac{s^{\nu(1-\rho)-2}(1-\omega s^\alpha)^{\gamma\nu}g^*(k)}
{s^{-\rho}(1-\omega s^\alpha)^\gamma+Mk^2  }\\&
\overline \psi^*(k,s)=\frac{s^{\nu(1-\rho)-2}(1-\omega s^\alpha)^{\gamma\nu}g^*(k)}
{s^{-\rho}(1-\omega s^\alpha)^\gamma\left[1+\frac{Mk^2}{s^{-\rho}(1-\omega s^\alpha)^\gamma  }   \right]  }\\&
% \overline \psi^*(k,s)=s^{\nu(1-\rho)-2}(1-\omega s^\alpha)^{\gamma\nu-\gamma}g^*(k)\sum_{n=0}^{\infty}\left[\frac{-Mk^2}{s^{-\rho}(1-\omega s^\alpha)^\gamma  } \right]^n\\&
\overline \psi^*(k,s)=s^{\nu(1-\rho)-2}(1-\omega s^\alpha)^{\gamma\nu-\gamma}g^*(k)\sum_{n=0}^{\infty}(-Mk^2)^n s^{\rho n}(1-\omega s^\alpha)^{-\gamma n}\\&
\overline \psi^*(k,s)=g^*(k)\sum_{n=0}^{\infty}(-Mk^2)^n s^{\rho n +\nu(1-\rho)+\rho-2}(1-\omega s^\alpha)^{\gamma \nu-\gamma n-\gamma}
\end{split}
\end{equation*}
for $ \left(\frac{Mk^2}{s^{-\rho}(1-\omega s^\alpha)^\gamma  } \right)<1 $, taking the inverse of Fourier and Sawi transforms, we will get the desired result  
\begin{equation*}
\begin{split}
\psi(x,t)
= \frac{1}{2\pi}\int_{-\infty}^{\infty}\sum_{n=0}^{\infty}e^{-ikx}g(k)(-M k^2)^nt^{\rho(n+1)-\nu(\rho-1)-1} E_{\alpha,\rho(n+1)+\nu(1-\rho)}^{\gamma(n+1-\nu)} dk
\end{split}
\end{equation*}
\end{proof}

\begin{thm} The solution of Cauchy type fractional differential equation
\begin{equation}\label{reg hilfer-prabhakar problem with lambda(1-x)}
^C \mathcal{D}_{\alpha,-\omega,0^+}^{\gamma,\rho,\nu}\psi(x,t)=-\lambda(1-x)\psi(x,t), ~~for~~ |x|\leq 1
\end{equation}
\begin{equation}\label{f(x,0)=1}
f(x,0)=1
\end{equation}
with~~~~~ $ t>0,~\lambda>0,~\gamma\geq 0,~ 0<\alpha\leq 1,0<\rho\leq 1,~~is $
\begin{equation}
\psi(x,t)=\sum_{n=0}^{\infty}(-\lambda)^n(1-x)^n t^{\rho n}E_{\alpha,\rho n+1}^{\gamma n}(-\omega t^\alpha)
\end{equation}
\end{thm}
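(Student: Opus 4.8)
The plan is to follow the template of the preceding application theorems, with one simplification: here the spatial variable $x$ enters the equation only as a parameter, since the right-hand side $-\lambda(1-x)\psi(x,t)$ is pointwise multiplication in $x$. Consequently no Fourier transform in $x$ is required, and the whole argument is carried out by the Sawi transform in the variable $t$ alone.

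First I would apply the Sawi transform in $t$ to both sides of (\ref{reg hilfer-prabhakar problem with lambda(1-x)}). Writing $\overline\psi(x,s)=Sa[\psi(x,t),s]$ and using the Sawi transform of the regularized Hilfer--Prabhakar derivative (\ref{Sawi of reg hilfer-prabhakar}) with $\omega$ replaced throughout by $-\omega$, together with the initial condition $\psi(x,0^+)=1$ from (\ref{f(x,0)=1}), the transform of the left-hand side becomes
\[
s^{-\rho}(1+\omega s^\alpha)^\gamma\,\overline\psi(x,s)-s^{-\rho-1}(1+\omega s^\alpha)^\gamma,
\]
while the transform of the right-hand side is simply $-\lambda(1-x)\,\overline\psi(x,s)$. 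Solving the resulting linear relation for $\overline\psi(x,s)$, factoring $s^{-\rho}(1+\omega s^\alpha)^\gamma$ out of the denominator, and expanding the remaining factor as a geometric series --- valid on the range of $s$ for which $\bigl|\lambda(1-x)s^{\rho}(1+\omega s^\alpha)^{-\gamma}\bigr|<1$, which holds for $s$ sufficiently small since $\rho>0$ --- one obtains
\[
\overline\psi(x,s)=\sum_{n=0}^{\infty}(-\lambda)^n(1-x)^n\,s^{\rho n-1}(1+\omega s^\alpha)^{-\gamma n}.
\]

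It then remains to invert term by term. Each summand equals $s^{(\rho n+1)-2}(1+\omega s^\alpha)^{-\gamma n}$, so the lemma giving the Sawi transform of the three-parameter Mittag--Leffler function (\ref{Sawi of 3 pera M-L function }), applied with the parameter replacements $\rho\mapsto\rho n+1$, $\gamma\mapsto\gamma n$ and $\omega\mapsto-\omega$, yields $Sa^{-1}\bigl[s^{\rho n-1}(1+\omega s^\alpha)^{-\gamma n}\bigr]=t^{\rho n}E_{\alpha,\rho n+1}^{\gamma n}(-\omega t^\alpha)$. Summing over $n$ then produces the asserted closed form for $\psi(x,t)$.

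The delicate point is not the algebra but the analytic justification of this last step: one must check that the geometric expansion is legitimate on a suitable range of $s$, that the series of inverse transforms converges (uniformly on compact $t$-intervals, using that the Prabhakar functions $E_{\alpha,\rho n+1}^{\gamma n}$ are entire together with elementary bounds on their Taylor coefficients), and hence that the Sawi inverse may be taken termwise. One should also verify a posteriori that the resulting $\psi$ meets the data: the $n=0$ term is $E_{\alpha,1}^{0}(-\omega t^\alpha)=1$, so $\psi(x,0)=1$, while every term with $n\ge 1$ carries the factor $t^{\rho n}$ which vanishes at $t=0$; and one should confirm that $\psi$ lies in the function class for which (\ref{Sawi of reg hilfer-prabhakar}) was derived, so that the formal manipulations are rigorous.
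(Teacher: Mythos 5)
Your proposal follows exactly the paper's own route: apply the Sawi transform in $t$ only (treating $x$ as a parameter), use (\ref{Sawi of reg hilfer-prabhakar}) with $\omega\mapsto-\omega$ and the initial condition to get the linear relation for $\overline\psi(x,s)$, expand the resolvent as a geometric series, and invert term by term via (\ref{Sawi of 3 pera M-L function }). The additional remarks on convergence and on checking the initial data go beyond what the paper records, but the argument is the same.
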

\begin{proof} The function $\overline \psi(x,s)$ represents the Sawi transform of $\psi(x,t)$ w.r.t.  variable $t$. By applying the Sawi transform on equation (\ref{reg hilfer-prabhakar problem with lambda(1-x)}), using (\ref{Sawi of reg hilfer-prabhakar}), (\ref{f(x,0)=1}), the transformed solution is obtained
\begin{equation*}
\begin{split}
s^{-\rho}&(1+\omega s^\alpha)^\gamma ~\overline \psi(k,s)-s^{-\rho-1}(1+\omega s^\alpha)^\gamma~ \overline \psi(x,0)=-\lambda(1-x)\overline \psi(x,s)\\&
\overline \psi(k,s)[s^{-\rho}(1+\omega s^\alpha)^\gamma +\lambda(1-x)]=s^{-\rho-1}(1+\omega s^\alpha)^\gamma\\&
\overline \psi(k,s)=\frac{s^{-\rho-1}(1+\omega s^\alpha)^\gamma }
{s^{-\rho}(1+\omega s^\alpha)^\gamma\left[1+\frac{\lambda(1-x)}{s^{-\rho}(1+\omega s^\alpha)^\gamma  }  \right]  }\\&
\overline \psi(k,s)=\frac{s^{-\rho-1}(1+\omega s^\alpha)^\gamma }
{s^{-\rho}(1+\omega s^\alpha)^\gamma  }\left[1+\frac{\lambda(1-x)}{s^{-\rho}(1+\omega s^\alpha)^\gamma  }  \right]^{-1}\\&
\overline \psi(k,s)=s^{-1} \sum_{n=0}^{\infty} \left[\frac{-\lambda(1-x)}{s^{-\rho}(1+\omega s^\alpha)^\gamma  }  \right]^n\\&
\overline \psi(k,s)=\sum_{n=0}^{\infty}(-\lambda)^n(1-x)^n s^{\rho n-1}(1+\omega s^\alpha)^{-\gamma n} 
\end{split}
\end{equation*}
for $\frac{\lambda(1-x)}{s^{-\rho}(1+\omega s^\alpha)^\gamma  } <1$, applying inverse of Sawi transform on both sides of the above equation and we will get the desired result
\begin{equation*}
\psi(x,t)=\sum_{n=0}^{\infty}(-\lambda)^n(1-x)^n t^{\rho n}E_{\alpha,\rho n+1}^{\gamma n}(-\omega t^\alpha)
\end{equation*}
\end{proof}

\begin{thm}
This theorem presents the solution for the Cauchy-type fractional differential equation.
\begin{equation}\label{hilfer-prabhakar prob with Pr integral}
\mathcal{D}_{\alpha,\omega,0^+}^{\gamma,\rho,\nu}
\psi(t)
=\lambda\mathcal{I}_{\alpha,\rho,\omega,0^+}^{\delta}\psi(t)+y(t),
\end{equation}
\begin{equation}\label{condition of M}
\left(\mathcal{I}_{\alpha,(1-\nu)(1-\rho),\omega,0^+}^{-\gamma(1-\nu)}\psi(t) \right)|_{t=0}= M
\end{equation}
\end{thm}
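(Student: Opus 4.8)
The plan is to reuse the scheme of the preceding application theorems: transform, solve algebraically, expand in a geometric series, and invert term by term. First I would apply the Sawi transform in $t$ to both sides of (\ref{hilfer-prabhakar prob with Pr integral}). On the left-hand side I use the Sawi transform of the Hilfer--Prabhakar derivative (\ref{Sawi of hilfer-prabhakar}); its boundary term is exactly $\mathcal{I}_{\alpha,(1-\nu)(1-\rho),\omega,0^+}^{-\gamma(1-\nu)}\psi(t)|_{t=0}$, which the initial condition (\ref{condition of M}) identifies with $M$. On the right-hand side I apply the Sawi transform of the Prabhakar integral (\ref{Sawi of prabhakar integral}) to $\lambda\,\mathcal{I}_{\alpha,\rho,\omega,0^+}^{\delta}\psi(t)$ and abbreviate $\mathcal{Y}(s)=Sa[y(t),s]$. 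With $\mathcal{T}(s)=Sa[\psi(t),s]$ this gives the linear algebraic relation
\begin{equation*}
s^{-\rho}(1-\omega s^\alpha)^\gamma\,\mathcal{T}(s)-s^{\nu(1-\rho)-2}(1-\omega s^\alpha)^{\gamma\nu}M=\lambda\,s^{\rho}(1-\omega s^\alpha)^{-\delta}\,\mathcal{T}(s)+\mathcal{Y}(s).
\end{equation*}

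Next I would collect the $\mathcal{T}(s)$ terms, divide, and pull $s^{-\rho}(1-\omega s^\alpha)^\gamma$ out of the denominator, leaving the factor $1-\lambda s^{2\rho}(1-\omega s^\alpha)^{-\gamma-\delta}$. Expanding this in a geometric series, valid on the $s$-range where $|\lambda s^{2\rho}(1-\omega s^\alpha)^{-\gamma-\delta}|<1$ (the same convergence restriction used in the earlier proofs), yields
\begin{equation*}
\mathcal{T}(s)=\Big[M\,s^{\nu(1-\rho)-2}(1-\omega s^\alpha)^{\gamma\nu}+\mathcal{Y}(s)\Big]\sum_{n=0}^{\infty}\lambda^{n}\,s^{\rho(2n+1)}(1-\omega s^\alpha)^{-\gamma(n+1)-n\delta}.
\end{equation*}
Distributing the bracket, the $M$-part has $n$-th summand $M\lambda^{n}\,s^{\mu_n-2}(1-\omega s^\alpha)^{-\beta_n}$ with $\mu_n=\nu(1-\rho)+\rho(2n+1)$ and $\beta_n=\gamma(1-\nu)+n(\gamma+\delta)$, while the $\mathcal{Y}$-part has $n$-th summand $\lambda^{n}\,s^{2}\mathcal{Y}(s)\,s^{\rho(2n+1)-2}(1-\omega s^\alpha)^{-(\gamma+n(\gamma+\delta))}$.

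Finally I would invert term by term. For the $M$-part, the Mittag-Leffler Sawi pair (\ref{Sawi of 3 pera M-L function }) gives $Sa^{-1}\!\left[s^{\mu_n-2}(1-\omega s^\alpha)^{-\beta_n}\right]=t^{\mu_n-1}E_{\alpha,\mu_n}^{\beta_n}(\omega t^\alpha)$; for the $\mathcal{Y}$-part, the convolution rule (\ref{9}) together with the same pair produces $y(t)*e_{\alpha,\rho(2n+1),\omega}^{\gamma+n(\gamma+\delta)}(t)=\mathcal{I}_{\alpha,\rho(2n+1),\omega,0^+}^{\gamma+n(\gamma+\delta)}y(t)$. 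Summing over $n$ then gives the stated closed form for $\psi(t)$. The main obstacle is the legitimacy of the termwise inversion, i.e.\ interchanging the infinite sum with $Sa^{-1}$: this needs the geometric bound above to hold on the relevant strip of $s$ (and $y$ to lie in the admissible class $\mathcal{A}$), exactly as the earlier application theorems implicitly assume; everything else is exponent bookkeeping aimed at matching the two standard identities $Sa^{-1}[s^{\rho-2}(1-\omega s^\alpha)^{-\gamma}]=t^{\rho-1}E_{\alpha,\rho}^{\gamma}(\omega t^\alpha)$ and $Sa^{-1}[s^{2}\mathcal{F}(s)\mathcal{G}(s)]=\psi*\phi$.
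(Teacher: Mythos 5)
Your proposal is correct and follows essentially the same route as the paper: Sawi transform of both sides using the Hilfer--Prabhakar derivative formula and the Prabhakar integral formula, algebraic solution for $\mathcal{T}(s)$, geometric-series expansion under the same smallness condition, and termwise inversion via the Mittag--Leffler pair and the convolution rule. Your exponent bookkeeping ($\mu_n$, $\beta_n$) matches the paper's stated solution exactly, and your explicit use of the convolution identity to recover $\mathcal{I}_{\alpha,\rho(2n+1),\omega,0^+}^{\gamma+n(\gamma+\delta)}y(t)$ makes that step slightly more transparent than the paper's version.
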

where~  $\psi(t)\in L_1[0,\infty):\rho\in(0,1),\nu\in[0,1]:\omega,\lambda \in \mathbb{C} : t,\alpha>0,K,\gamma,\delta\geq 0,
~~is~~ given~~ by$
\begin{equation}
\begin{split}
\psi(t)
=  \sum_{n=0}^{\infty}\lambda^n \mathcal{I}_{\alpha,\rho(2n+1),\omega,0^+}^{\gamma+n(\delta+\gamma)}y(t) +M  \sum_{n=0}^{\infty}\lambda ^n& t^{\rho(2n+1)+\nu(1-\rho)-1} \\&
\times E_{\alpha,\nu(1-\rho)+\rho(2n+1)}^{\delta n +\gamma n +\gamma-\gamma \nu} (\omega t^\alpha)
\end{split}
\end{equation}
\begin{proof}The solution of the Cauchy-type fractional differential equation can be obtained by applying the Sawi transform to both sides of equation (\ref{hilfer-prabhakar prob with Pr integral}), and using the equations (\ref{Sawi of hilfer-prabhakar}), (\ref{condition of M}), (\ref{Sawi of prabhakar integral}) 
\begin{equation*}
\begin{split}
Sa[\lambda \mathcal{I}_{\alpha,\rho,\omega,0^+}^{\delta}\psi(t)+y(t)]&
=Sa\left[\lambda \mathcal{I}_{\alpha,\rho,\omega,0^+}^{\delta}\psi(t),s \right]+Sa[y(t),s]\\&
% =\lambda Sa\left[ \mathcal{I}_{\alpha,\rho,\omega,0^+}^{\delta}\psi(t),s \right]+ Sa[y(t),s]\\&
=\lambda Sa[(\psi *e_{\alpha,\rho,\omega}^\delta)(t),s ] +Sa[y(t),s] \\&
=\lambda Sa[\psi(t) t^{\rho-1}E_{\alpha,\rho}^{\delta}(\omega t^\alpha),s]+Sa[y(t),s]\\&
=\lambda s^2 \mathcal{T}(s)s^{\rho-2}(1-\omega s^\alpha)^{-\delta}+Sa[y(t),s]\\&
=\lambda s^\rho (1-\omega s^\alpha)^{-\delta} \mathcal{T}(s) + Sa[y(t),s],\\&
\end{split}
\end{equation*}

\begin{equation*}
\begin{split}
Sa\left[\mathcal{D} _{\alpha,\omega,0^+}^{\gamma,\rho,\nu}y(t),s \right]&
=  s^{-\rho} (1- \omega s^\alpha )^\gamma \mathcal{T}(s)  \\ &
- s^{\nu(1-\rho)-2} (1-\omega{s^\alpha} )^{\gamma \nu} \mathcal{I}_{\alpha,(1-\nu)(1-\rho),\omega,0^+}^{-\gamma(1-\nu)}\psi(t)|_{t=0},\\&
= s^{-\rho} (1- \omega s^\alpha )^\gamma \mathcal{T}(s)
- s^{\nu(1-\rho)-2} (1-\omega{s^\alpha} )^{\gamma \nu} M 
\end{split}
\end{equation*}
therefore
\begin{equation*}
\begin{split}
\mathcal{T}(s)&
=\frac{ Sa[y(t),s] +s^{\nu(1-\rho)-2} (1-\omega{s^\alpha} )^{\gamma \nu} M }
{ s^{-\rho} (1- \omega s^\alpha )^\gamma 
\left[1- \frac{\lambda s^\rho (1-\omega s^\alpha )^{-\delta} 
} {s^{-\rho}(1-\omega s^\alpha)^\gamma  }
\right]}\\&
= \frac{Sa[y(t),s] +s^{\nu(1-\rho)-2} (1-\omega{s^\alpha} )^{\gamma \nu} M  }{s^{-\rho} (1- \omega s^\alpha )^\gamma  }\sum_{n=0}^{\infty} \left[ \frac{\lambda s^\rho (1-\omega s^\alpha )^{-\delta} 
} {s^{-\rho}(1-\omega s^\alpha)^\gamma  }
\right]^n \\&
% =\frac{Sa[y(t),s] +s^{\nu(1-\rho)-2} (1-\omega{s^\alpha} )^{\gamma \nu} M  }{s^{-\rho} (1- \omega s^\alpha )^\gamma  }\sum_{n=0}^{\infty}\lambda^n s^{2\rho n}(1-\omega s^\alpha)^{-\delta n-\gamma n} \\&
= \left(Sa[y(t),s] +s^{\nu(1-\rho)-2} (1-\omega{s^\alpha} )^{\gamma \nu} M \right)\sum_{n=0}^{\infty}\lambda^n s^{2\rho n+\rho}(1-\omega s^\alpha)^{-\delta n-\gamma n-\gamma}\\&
=Sa[y(t),s]\sum_{n=0}^{\infty}\lambda^n s^{2\rho n+\rho}(1-\omega s^\alpha)^{-\delta n-\gamma n-\gamma}\\&+ M\sum_{n=0}^{\infty}\lambda^n s^{2\rho n+\rho+\nu(1-\rho)-2}(1-\omega s^\alpha)^{-\delta n-\gamma n-\gamma+\gamma \nu}
\end{split}
\end{equation*}
for $ \left[ \frac{\lambda s^\rho (1-\omega s^\alpha )^{-\delta} } {s^{-\rho}(1-\omega s^\alpha)^\gamma  }
\right]<1 $, and by taking the inverse of Sawi transform on both sides of the above equation, we will get the desired result
\begin{equation*}
\begin{split}
\psi(t)
=  \sum_{n=0}^{\infty}\lambda^n  \mathcal{I}_{\alpha,\rho(2n+1),\omega,0^+}^{\gamma+n(\delta+\gamma)}y(t) +M  \sum_{n=0}^{\infty}& \lambda ^nt^{\rho(2n+1)+\nu(1-\rho)-1} \\&
\times E_{\alpha,\nu(1-\rho)+\rho(2n+1)}^{\delta n +\gamma n +\gamma-\gamma \nu} (\omega t^\alpha)
\end{split}
\end{equation*}
\end{proof}

\section{Conclusion}
In this work, we first obtained the Sawi transform for the Hilfer-Prabhakar fractional derivatives and their regularized versions. Then, we demonstrated how to use these results to find the solution of Cauchy-type fractional differential equations involving Hilfer-Prabhakar fractional derivatives by combining the Sawi and Fourier transforms, which utilizes the three-parameter Mittag-Leffler function. The results show that the Sawi transform is a useful tool for solving fractional differential equations.


\begin{thebibliography}{33}
\bibitem{doi:10.1142/9789812817747_0002}   Hilfer, R., {Applications of Fractional Calculus in Physics}, World Scientific Publishing Company, Singapore-New jersey-Hong Kong (2000), 87-130.
\bibitem{10.5555/1137742}  Kilbas, A.A.,  Srivastava, H.M. and  Trujillo, J. {Theory and Applications of Fractional Differential Equations}, Elsevier Science, USA (2006).
\bibitem{prabhakar1971singular}   Prabhakar, T.R., \textit{et al.}, A singular integral equation with a generalized Mittag Leffler function in the kernel, {Yokohama Math. J.}, 19 (1971), 7-15.
\bibitem{Jafari}Jafari, H.: A new general integral transform for solving integral equations. J. Adv. Res. (2020).
https://doi.org/10.1016/j.jare.2020.08.016
\bibitem{Meddahi 2021}Meddahi, M., Jafari, H. and Ncube, M.N.,  New general integral transform via Atangana–Baleanu derivatives. Advances in Difference Equations, 2021(1), 1-14.
\bibitem{regularizedprabhakar}Garra, Roberto, et al, Hilfer–Prabhakar derivatives and some applications, Applied mathematics and computation 242 (2014), 576-589.
\bibitem{k-Hilfer-Prabhakar}S.K. Panchal, P.V. Dole, A.D. Khandagale, k-Hilfer-Prabhakar Fractional Derivatives and Applications, Indian journal of
mathematics, 59, (2017), 367-383
\bibitem{sumudu}Panchal SK, Khandagale AD, Dole PV. Sumudu transform of Hilfer-Prabhakar fractional derivatives and applications. arXiv preprint arXiv:1608.08017. 2016 Aug 29.
\bibitem{Singh and Yudhveer}Singh, Yudhveer, et al, On the Elzaki transform and its applications in fractional free electron laser equation, Acta Universitatis Sapientiae, Mathematica 11(2) (2019), 419-429.
\bibitem{shehu of hilfer-prabhakar}BELGACEM, Rachid, Ahmed BOKHARİ, and Boualem SADAOUİ, Shehu transform of Hilfer-Prabhakar fractional derivatives and applications on some Cauchy type problems, Advances in the Theory of Nonlinear Analysis and its Application 5(2) (2021), 203-214.
\bibitem{Agrawal2008omp}Agrawal, Om P., Fractional optimal control of a distributed system using eigenfunctions, Journal of Computational and Nonlinear Dynamics 3(2) (2008).
\bibitem{Liu2012}Liu, Fawang, Pinghui Zhuang, and Kevin Burrage, Numerical methods and analysis for a class of fractional advection–dispersion models,  Computers and Mathematics with Applications 64(10) (2012), 2990-3007.
\bibitem{Mainardi2010}Mainardi, Francesco, An introduction to mathematical models, Fractional calculus and waves in linear viscoelasticity. Imperial College Press, London (2010).
\bibitem{Miller1993}Miller, K. S., and B. Ross, An Introduction to the Fractional Integrals and Derivatives-Theory and Applications, John Willey and Sons, Inc., New York (1993).
\bibitem{Oldham1974}K.B. Oldham, J. Spanier, The Fractional Calculus, Academic, New York, 1974.
\bibitem{Samkokilbas1993}Samko, G., A. Kilbas, and O. Marichev, Fractional Integrals and Derivatives: theory and applications, Amsterdam: Gordon and Breach, (1993).
\bibitem{Mahgoub2019}Mahgoub, MM Abdelrahim, and M. Mohand, The new integral transform''Sawi Transform'',  Advances in Theoretical and Applied Mathematics 14(1) (2019), 81-87.
\bibitem{Aggarwal2020}Aggarwal S, Sharma SD, Vyas A. Sawi transform of Bessel’s functions with application for evaluating definite integrals. Int J Latest Technol Eng, Manage Appl Sci. 9(7) (2020) 12-8.
\bibitem{Millerkweyl1975}Miller, K., The weyl fractional calculus, Springer, Berlin Heidelberg (1975), 80–
89
\bibitem{Saxenaweyl2006}Saxena, R.K., Mathain, A.M. and Haubold, H.J., Reaction diffusion systems and
nonlinear waves, Astrophys. Space Sci., 305(3) (2006), 297–303
\bibitem{MetzlerandKlafter}Metzler, R. and Klafter, J., The random walk’s guide to anomalous diffusion: a
fractional dynamics approach, Phys. Rep., 339(1) (2000), 1–77.
\bibitem{DebnathBhatta2007}Debnath L. and Bhatta D., Integral transforms and their applications, Chapman and Hall/CRC, Taylor and Francis Group, New York (2007).
\bibitem{SinghGyanvendra}Singh, Gyanvendra Pratap, and Sudhanshu Aggarwal, Sawi transform for population growth and decay problems, International Journal of Latest Technology in Engineering, Management and Applied Science 8(8) (2019), 157-162.
\bibitem{Higazy2021}Higazy, M., and Sudhanshu Aggarwal, Sawi transformation for system of ordinary differential equations with application, Ain Shams Engineering Journal 12(3) (2021), 3173-3182.
\bibitem{AggarwalSudhanshu2019}Aggarwal, Sudhanshu, and Anjana Rani Gupta, Dualities between some useful integral transforms and Sawi transform, International Journal of Recent Technology and Engineering 8(3) (2019), 5978-5982.
\bibitem{Patil2021}Patil, D. P, Application of Sawi transform in Bessel functions, Aayushi International Interdisciplinary Research Journal (AIIRJ) (2021).
\bibitem{Higazy2020}Higazy, M., Sudhanshu Aggarwal, and Taher A. Nofal, Sawi decomposition method for Volterra integral equation with application, Journal of Mathematics 2020 (2020).
\bibitem{Bhuvaneswari2021}Bhuvaneswari, R., and K. Bhuvaneswari, Application of Sawi transform in Cryptography, International Journal of Modern Agriculture 10(1) (2021), 185-189.
\bibitem{Sahoo2022}Sahoo, Mrutyunjaya, and Snehashish Chakraverty, Sawi Transform Based Homotopy Perturbation Method for Solving Shallow Water Wave Equations in Fuzzy Environment,  Mathematics 10(16) (2022), 2900.
\bibitem{agarwal2016}Agarwal, Ritu, Sonal Jain, and R. P. Agarwal, Analytic solution of generalized space time advection-dispersion equation with fractional Laplace operator, J Nonlinear Sci  (2016), 3545-54.
\end{thebibliography}
\end{document}